\newtheorem{theorem}{Theorem}
\newtheorem{proposition}[theorem]{Proposition}
\newtheorem{lemma}[theorem]{Lemma}
\newtheorem{corollary}[theorem]{Corollary}
\newtheorem{remark}[theorem]{Remark}
\newcommand\ve{\varepsilon}
\newcommand\ff{\varphi}
\newcommand\de{\delta}
\newcommand\D{\Delta}
\newcommand\om{\omega}
\newcommand{\sign}{\mathop{\mathrm{sgn}}}
\newcommand{\NN}{\mathbb N}
\newcommand{\RR}{\mathbb R}
\newcommand{\CC}{\mathbb C}
\newcommand\PP{\mathcal{P}}
\newcommand\Q{\mathcal{Q}}
\newcommand\X{\mathcal{X}}
\newcommand\Y{\mathcal{Y}}
\newcounter{reb}
\newcounter{rev}
\newcounter{rec}
\def\EV{\lambda}
\begin{document}
\title[On the weight function in GPY for small gaps between primes]{On the optimal
weight function in the \\ Goldston-Pintz-Y\i ld\i r\i m method \\ for finding small gaps between consecutive primes}

\date{\today}

\author{B\'alint Farkas}
\address{Bergische Universit\"at Wuppertal \newline Faculty of Mathematics and Natural
  Science \newline Gau\ss stra\ss e 20, 42119 Wuppertal, GERMANY}
\email{farkas@uni-wuppertal.de}

\author{J\'anos Pintz}
\address{Alfr\' ed R\' enyi Institute of Mathematics \newline Hungarian Academy of
  Sciences \newline 1364 BUDAPEST, PO Box 127, HUNGARY}
\email{pintz@renyi.mta.hu \newline revesz.szilard@renyi.mta.hu}

\author{Szil\'ard R\'ev\'esz}
\address{Department of Mathematics \newline Kuwait University \newline P.O. Box 5969 Safat --
  13060 KUWAIT}
\email{szilard@sci.kuniv.edu.kw}

\subjclass[2000]{Primary 11N05, 47A75. Secondary 47A53, 49J05,
49K15, 49N10.} \keywords{Prime distribution, gaps between
primes,
Goldston--Pintz--Y\i ld\i r\i m method,
 selfadjoint Fredholm type operator,
Bessel differential equation, Bessel
functions of the first and second kind.}

\thanks{Supported in part by the Hungarian National Foundation for Scientific Research, Project \# K-81658, K-100291, K-100461 and NK-104183.
Work done in the framework of the project ERC-AdG 228005.}

\begin{abstract}
We work out the optimization problem, initiated by
K.{} Soundararajan, for the choice of the underlying polynomial $P$
used in the construction of the weight function in the
Goldston--Pintz--Y\i ld\i r\i m method for finding small gaps between
primes. First we reformulate to a maximization  problem
on $L^2[0,1]$ for a self-adjoint operator $T$, the norm of
which is then the maximal eigenvalue of $T$. To find
eigenfunctions and eigenvalues, we derive a differential
equation which can be explicitly solved. The aimed maximal
value is $S(k)=4/(k+ck^{1/3})$, achieved by the
$k-1^{\text{st}}$ integral of
$x^{1-k/2}J_{k-2}(\alpha_{1}\sqrt{x})$, where $\alpha_1\sim c
k^{1/3}$ is the first positive root of the $k-2^{\text{nd}}$
Bessel function $J_{k-2}$. As this naturally gives rise to a
number of technical problems in the application of the GPY
method, we also construct a polynomial $P$ which is a simpler
function yet it furnishes an approximately optimal extremal
quantity, $4/(k+Ck^{1/3})$ with some other constant $C$. In the
forthcoming paper of J.{} Pintz \cite{Pintzuj} it is indeed shown how this
quasi-optimal choice of the polynomial in the weight finally
can exploit the GPY method to its theoretical limits.
\end{abstract}

\maketitle

\section{Introduction}\label{sec:intro}

\subsection{The extremal problem as given by Soundararajan.}\label{sec:Sound}
In his work \cite{S} Soundararajan presents and analyzes the
proof of Goldston--Pintz--Y\i ld\i r\i m yielding small gaps between
primes. Among others he raises and answers one of the most
important problems of the field: Is it possible to modify the
weight function $a(n)$ in such a way that the method would lead
to infinitely many bounded gaps between consecutive primes. If
we consider the weight functions in full generality, that is
all functions $a(n)$, then this leads essentially to a
tautology. For example, defining $a(n)=1$ if both $n$ and $n+2$ are
primes, and otherwise setting $a(n)=0$, the summatory function
of $a(n)$ describes the number of twin primes up to $x$. Thus
we cannot hope an asymptotic evaluation of the summatory
function. We briefly describe the feasible choices of the weight function $a$. Let us take an admissible $k$-tuple ${\mathcal H}=
\{h_1,\dots,h_k\}$ meaning that there is no
prime $p$ with the property that the elements $h_i$ of
${\mathcal H}$ cover all residue classes mod $p$. Let
$P_{\mathcal H} (n)= \prod_{i=1}^k (n+h_i)$ and let us define
$\lambda_d= \mu(d) P(\frac{\log(R/d)}{\log R})$ with a nice
function $P$, for example a polynomial, with the additional property
$\lambda_1=1$, which is equivalent to $P(1)=1$. Afterwards we
reduce our choice of $a(n)$ to those of type $a(n) =
\sum_{d\leq R, d|P_{\mathcal H}(n)} \lambda_d$ and try to evaluate
the summatory function of $a(n)$ and that of $a(n) \chi(n+h)$,
where $h$ is an arbitrary number with $h<\log n$ and $\chi$ is
the characteristic function of the primes. (In case of bounded
gaps between primes it is sufficient to consider the case when
$h=h_i, i=1,2,\dots,k$.)

Soundararajan explains, how the optimal weight function $a(n)$,
hence $\lambda_d$, should be chosen to obtain best result: see
formula (8) in \cite{S}. In order to get this optimum, he also
explains the choice $\lambda_d:=\mu(d)
P\left(\frac{\log(R/d)}{\log R}\right)$ where $P$ is some
suitably nice function, like a polynomial or at least a
sufficiently many times (at least $k$ times) differentiable,
smooth function on $[0,1]$ (or at least on $[0,1)$),
\emph{vanishing at least in the order $k$ at $0$}, and
satisfying the normalization $P(1)=1$. Then, according to the
analysis by Soundararajan, the optimal choice for $a(n)$ and
$\lambda_d$ is equivalent to looking for the maximal possible
value of (12) of \cite{S}, i.e., to determining
\begin{equation}\label{Sproblem}
S(k):=\sup_P \left(\int_0^1 \frac{x^{k-2}}{(k-2)!}
\left(P^{(k-1)}(1-x)\right)^2  dx \right) \bigg/ \left(\int_0^1
\frac{x^{k-1}}{(k-1)!} \left(P^{(k)}(1-x)\right)^2  dx \right),
\end{equation}
where the set of functions $P$, to be taken into account in the
supremum, can be the set of certain polynomials as before, or more
generally a family of functions subject to some conditions.

Soundararajan \cite{S} shows that the question whether we are
able to find in this way infinitely many bounded gaps between
primes is equivalent to the problem whether there exists any
natural number $k$ with $S(k)> 4/k$. Then he mentions that the
opposite inequality $S(k)<4/k$ holds for all $k$ and therefore
the method cannot yield infinitely many bounded prime gaps. (In
an earlier unpublished note \cite{Se} he gives the short proof
of this fact; we will reproduce this in \S
\ref{sec:estimationS(k)}. His considerations also lead easily
to the stronger inequality $S(k) < 4/(k+c\log k)$, cf. \S
\ref{sec:estimationS(k)}). Although his work answered
negatively the above mentioned central problem, it gave some
hints but did not answer the question: What is the best weight
function that can be chosen, and what size of gaps are implied
by it? In their work \cite{GPYII} Goldston, Pintz and Y\i ld\i
r\i m showed that if one takes $P(x)=x^{k+\ell}$, where $k$ and
$\ell$ are allowed to tend to infinity with the size $N$ of the
primes considered, then with several essential modifications of
the original method one can reach infinitely many prime gaps of
size essentially $\sqrt{\log p}$. (To have an idea of the
difficulties it is enough to mention that the rather condensed
proof of the result needs about 40 additional pages beyond the
original one, presented with many details and explanations in
\cite{GPYI}. However, a shortened, simplified and more
condensed version \cite{GMPY} needs only 5 pages). In this case
$\ell=c\sqrt{k}$ and the value of the fraction \eqref{Sproblem}
is $4/(k+c'\sqrt{k}$) for the given choice of
$P(x)=x^{k+\ell}$. Beyond the mentioned important fact that $k$
and $\ell$ are unbounded in \cite{GPYII}, the scheme of the
proof is similar but not the same as in the simplified version
of Soundararajan \cite{S}. However, a careful analysis suggests
that in order to find the limits of the method it is necessary
(but as discussed a little later, not necessarily sufficient)
to find the size of $S(k)$ as $k$ tends to infinity together
with the function $P$ which yields a maximum (if it exists) in
the supremum, or at least a function $P$ which yields a value
``enough close'' to the supremum.

\subsection{Conditions and normalizations}
Before proceeding, let us discuss right here the issue of
conditions and normalizations in the formulation of this
maximization problem. First, it is clear that $P^{(k)}$ remains
unchanged, if we add any constant to $P^{(k-1)}$. Thus the
extremal problem becomes unbounded under addition of a free
constant, hence at least some conditions must certainly
control this divergence.

In the number theory construction of Goldston--Pintz--Y\i ld\i
r\i m, (by now generally abbreviated as the ``GPY method'') the
natural restriction is that $P$ must be a polynomial divisible
by $x^k$---or, if we try to generalize the method, then a
$k$-times continuously differentiable function with $P^{(j)}$
vanishing at $0$ for $j=0,\dots,k-1$. That is
$P(x)=\frac{P^{(k)}(0)}{k!} x^k + o(x^{k+1})$. The reason for
that is the fact that the whole idea hinges upon the use of the
generalized M\"obius inversion, more precisely of the
$\Lambda_{j}$ function, which must be zero for numbers having
at least $k$ prime factors---always satisfied by the numbers
represented by the product form $(n+h_1)\cdots(n+h_k)$ in the
construction. So for any meaningful weight function we need to
use weights not containing any smaller power $x^j$ than $x^k$.
In other words, we should assume here $P$ having a zero of
order $k$, i.e., $P(0)=P'(0)=\cdots=P^{(k-1)}(0)=0$, while
$P^{(k)}(0)$ can be arbitrary.

The analysis of Soundararajan exposed the question, whether a
linear combination of monomials, i.e., a polynomial, or perhaps
some more sophisticated choice of a weight function, may
perhaps improve even upon this. We can say that the
\emph{theoretical limit of the GPY method} is the result,
obtainable in principle by a choice of the weight function $P$
maximizing the extremal quantity \eqref{Sproblem}. Yet it is to
be noted that the technicalities of GPY are far more
substantial than to simply ``substituting any $P$'' in it would
automatically lead to a result---it is not even that clear,
what result would follow from a given weight function.
Therefore, to test the limits of the GPY method, we should
break our approach into two parts. First, we look for the
optimization of the weight $P$, in the sense of
\eqref{Sproblem}, and second, we extend the GPY method using
that weight function. This paper is concerned with this first
question, and the second part of this program is carried out in
\cite{Pintzuj}.

The aim of the present analysis is to settle the issue of
optimization in Problem \eqref{Sproblem}. We find the optimal
order, and the maximizer of the problem \eqref{Sproblem},
furthermore, as this maximum can be achieved by a relatively
sophisticated choice of the weight function $P$---actually a
transformed Bessel function---we also construct a polynomial
weight which is approximately optimal in \eqref{Sproblem}.

Part of these results were reached by J. B.{} Conrey and his
colleagues at the American Institute of Mathematics already in
2005. Using a calculus of variation argument they found the
Bessel function $J_{k-2}$ and made some calculations for
concrete values of $k$ (without analyzing the case
$k\to\infty$). The fact that the Bessel functions may perform
better than polynomials in the GPY method is also briefly noted
in the book of J. B.{} Friedlander and H.{} Iwaniec
\cite{FriedIwa} without going into details.

\subsection{Structure of the paper}\label{sec:Structure}
In this paper we proceed along the following course.

Interpreting the problem in the widest possible function class
which makes sense (i.e., when at least the occurring integrals
exist finitely) in Section \ref{sec:maximum} we make several
further reformulations until we arrive at a maximization
problem in the Hilbert space $L^2[0,1]$. Exploiting the rich structure of Hilbert spaces, and the
particular properties of the reformulation as a certain
quadratic form with a Fredholm-type operator, we derive
existence of maximizing functions in this wide function class.
Then we also exploit the concrete form of the kernel in our
Fredholm-type operator and compute that the maximizers, or,
more generally, eigenfunctions, are necessarily smooth.
Furthermore, in Section \ref{sec:DiffEq} we find that these
eigenfunctions satisfy certain differential equations.
 The solutions are then found to be transformed variants of
certain Bessel functions. Also it turns out that the solutions
are analytic, and they yield a function value in the extremal
problem directly related to the choice of a parameter, which,
due to the initial value restriction $P^{(k-1)}(0)=0$, must be
a zero of the arising Bessel function $J_{k-2}$. Finally these
combine to the full description of the maximal value $S(k)$
together with the precise form of the extremal function. From
the well-known asymptotic formula for the first zero of the
Bessel function $J_m$, when $m\to \infty$, we derive that
$S(k)$ is precisely asymptotic to $\frac{4}{k+ck^{1/3}}$ with
a concretely known constant $c=3.7115\dots$.

Unfortunately, in spite of analyticity and power series
expansion, the found extremal function is too complicated to be
used in the number theory method of GPY. Basically, we need
restrictions on the  degree and the coefficient size for the
powers appearing in the weight function $P$ to make the
complicated method work in a technically feasible way. As
discussed in Section \ref{sec:ps}, not even calculations using
the power series expansion of Bessel functions lead to feasible
expressions. Therefore, finally we look for quasi-optimal
polynomials, which still achieve close to extremal values. The
result of the last section is the concrete construction of a
polynomial $P$ satisfying the needed technical requirements and
still achieving in \eqref{Sproblem} a ratio of the order
$\frac{4}{k+Ck^{1/3}}$ with some other constant $C$. That
suffices in the method of GPY, because the value of the
constant $C$ does not increase the order, only the arising
constants, in the final result.

Settling the issue of the search of optimal and quasi-optimal
weights, the door opens up for revisiting the method of GPY and
not only improve upon all the known results, but also push the
available techniques to the theoretical limits of that method.
This closely connected work is carried out in the paper
\cite{Pintzuj}.

\section{Reformulations and the finiteness of
$S(k)$}\label{sec:reformulation}

\subsection{Reformulations}\label{sec:reformulations} The
normalization $P(1)=1$ is rather inconvenient because the next
reformulation (still following Soundararajan) is to put
$Q(y):=P^{(k-1)}(y)$, a completely logical step in view of the
fact that no values of $P$, $P'$, etc. $P^{(k-2)}$ occur in the
actual optimization problem \eqref{Sproblem} and that the still
occurring $P^{(k-1)}$ and $P^{(k)}$ can be nicely expressed as
$Q$ and $Q'$. So in line with the restriction that $P$ vanishes
at least to the order $k$ at 0, following Soundararajan we
write
\begin{align*}
P(x)&=\int_0^x \int_0^{x_1} \dots \int_0^{x_{k-2}} P^{(k-1)} (x_{k-1})  dx_{k-1}\dots dx_1
=\int_0^x \int_0^{x_1} \dots \int_0^{x_{k-1}} P^{(k)} (x_{k})  dx_{k-1}\dots dx_1
\\& =\int_0^x \int_0^{x_1} \dots \int_0^{x_{k-2}} Q(x_{k-1})  dx_{k-1}\dots dx_1
=\int_0^x \int_0^{x_1} \dots \int_0^{x_{k-1}} Q'(x_{k})  dx_{k}\dots dx_1.
\end{align*}
Therefore, $P(0)=\cdots =P^{(k-2)}(0)=P^{(k-1)}(0)=0$
transforms to the simpler requirement that $Q(0)=0$, while the
corresponding $P$ is obtained by the above integrals directly.
Let us record one more thing here: The condition that $P(1)=1$,
expressed in terms of $Q$, is a \emph{linear restriction}, as
$I(Q):=(P(1)=)\int_0^1 \int_0^{x_1} \dots \int_0^{x_{k-2}}
Q(x_{k-1})  dx_{k-1}\dots dx_1$ is just a linear functional on
the function $Q$. To express it in a more condensed, closed
form, we may apply Fubini's theorem to get a representation in
the form of the well-known Liouville integral
\begin{align*}
P(x)&= \int_0^x Q(t) \frac{(x-t)^{k-2}}{(k-2)!} dt,\\
P(1)&= \int_0^1 Q(t) \frac{(1-t)^{k-2}}{(k-2)!} dt = \int_0^1 Q(1-y) \frac{y^{k-2}}{(k-2)!} dt.
\end{align*}
Note the similarity to the numerator of the quotient in
\eqref{Sproblem}. It is thus immediate by the Cauchy--Schwarz
inequality that $P(1)$ is a finite, convergent integral
whenever the Lebesgue integral $\int_0^1 Q^2(1-y)
\frac{y^{k-2}}{(k-2)!} dy$ exists. That is, no special
requirement is needed to this effect once we guarantee that the
numerator and denominator in \eqref{Sproblem} are well-defined.

In all, we were to look for maximum in the family
\begin{equation}\label{Pcondi}
\PP:=\left\{P(x)=\int_0^x \int_0^{x_1} \dots \int_0^{x_{k-2}} P^{(k-1)}(x_{k-1})  dx_{k-1}\dots dx_1, \quad P(1)=1, \quad P^{(k-1)}(0)=0\right\},
\end{equation}
but following Soundararajan we changed the setup to
\begin{equation}\label{Q1condi}
\Q_1:=\left\{Q~:~ ~\int_0^1 Q(1-y) \frac{y^{k-2}}{(k-2)!} dy =1, \quad Q(0)=0\right\},
\end{equation}
where now $Q$ can be any (say, continuously differentiable)
function satisfying the requirements. This also means that we
want the occurring functions to belong to a suitable function
class, to be specified later. The quantity we seek to maximize
is then expressed as
\begin{equation}\label{RatioQ}
\frac{\int_0^1 n y^{n-1} Q^2(1-y) dy}{\int_0^1 y^{n} Q'^2(1-y) dy} \qquad (n:=k-1),
\end{equation}
which again is a fraction of two expressions, both \emph{quadratic homogeneous} in $Q$. Therefore, the ratio will be the same for $cQ$ with any $c\ne 0$ and the original question can thus be rewritten as looking for the supremum of these quantities among functions in
\begin{equation*}
\mathcal{Q}^{\star} := \left\{ Q ~:~ \int_0^1 y^{k-2} Q(1-y) dy \ne 0, \quad Q(0)=0 \right\}.
\end{equation*}
Continuity of $Q'$ is not indispensable, but of course the
ratio must be a ratio of finite quantities, with a nonzero and
finite denominator, hence we need still to restrict
considerations to functions $Q' \not \equiv 0$ or, in general
allowing discontinuous functions, $Q'$ not zero almost
everywhere and also satisfying $\int_0^1 x^n Q'^2(1-x) dx <
\infty$.

We will see in a moment--see the proof of the forthcoming
Proposition \ref{p:bound}--that this latter condition also
implies that even $\int n x^{n-1} Q^2(1-x) dx <\infty$, as
needed. Furthermore, together with the restriction that
$Q(0)=0$, we see that $Q$ is constant if only $Q\equiv 0$, so
we need to exclude only this obviously singular case. Otherwise
also the numerator remans finite, i.e. the ratio \eqref{RatioQ}
exists finitely, whence $S(k)$ exists at least as a supremum of
certain finite, positive quantities.

Let us observe that the condition that $P(1)\ne 0$, is a
\emph{linear condition}, equivalently stated in the form that
the linear functional\footnote{Linearity is clear, once the
integral is defined finitely. Then again, finiteness of
$\int_0^1 y^{n-1} Q^2(1-y) dy$, appearing in the numerator of
the extremal quantity, ensures by means of the Cauchy-Schwarz
inequality, finiteness of the functional values, too. So for
the rest of the argument to be valid, it suffices to check
finiteness of the numerator of \eqref{RatioQ}.}
$Q\longrightarrow \int_0^1 Q(1-y)y^{n-1} dy$ on the function
space of admissible functions $Q$ should not vanish. In other
words, the subset which falls out of consideration for not
meeting this condition is the kernel subspace of the linear
functional, which is of codimension one--in view of the fact
that the functional itself is not identically zero, obvious
from looking at functions $Q$ with $Q|_{(0,1)}>0$ certainly
yielding positive functional values--, so a hyperplane
${\mathcal H}$ of our linear function space $\X$ (whatever
choice of the function space and respective norm we make later
on).

Therefore, dropping the restriction that $P(1)=Q(0)\ne 0$ means
only that $q \not\in {\mathcal H}$ is dropped. In the following
we will find the supremum on ${\mathcal X}$, and actually will
show that here the supremum is finite, attained at certain
maximizers.

The only issue, which may bother us a little, if the actual
maximizers will belong to the singular hyperplane ${\mathcal
H}$, or stay in $\X \setminus {\mathcal H}$. That we should
check at the end. But maximizers $q\in \X$ will actually be
positive functions, so the value of the functional $I(q)=P(1)$
will be necessarily positive for these, and maximum over $\X$
or $\X\setminus {\mathcal H}$ will thus be seen to be the same.
We will leave it to the reader to check this and from now on
pass on to the class $\X$.

\subsection{The choice of the function class of the extremal
problem \eqref{Sproblem}}\label{sec:First} In view of the
above, let us fix the function class, where the extremal
problem \eqref{Sproblem}, initiated by Soundararajan, will be
considered, as follows. Write $q(x)=Q(1-x)$ as before. Then the
whole problem becomes
\begin{align}\label{qproblem}
\text{max} ~ (k-1)\left(\int_0^1 x^{k-2} q^2(x) dx \right) \Bigg/
\left(\int_0^1 x^{k-1} q'^2(x) dx \right) \quad
\text{under condition}~~ q(1)=0,
\end{align}
understood in an appropriate function class $\X$, like, e.g.,
$C^1[0,1]$.

Partial integration in the numerator and $q(1)=0$ yields now
the reformulation
\begin{equation}\label{qMaxProblem}
S(k)=\sup_{q\in \X, q(1)=0,q\ne {\bf 0}} \left(-2 \int_0^1
x^{k-1} q'(x)q(x)  dx \right) \Bigg/ \left(\int_0^1 x^{k-1}
q'^2(x) dx \right)~.
\end{equation}
Certainly we want the denominator to be finite, so we assume
that our function class is chosen in such a way that for any
$q\in\X$ this weighted square integral of $q'$ converges. This
implies the convergence of the numerator (as we'll see soon)
and consequently that also the positive, nondegenerate linear
functional $I(q):=(k-2)! P(1) = \int_0^1 q(t) t^{k-2} dt$ is
well-defined, finite. So now we fix the largest function space
we may deal with as
\begin{equation}\label{Xdef}
\X := \left\{ q:(0,1]\to \RR ~:~ q(x)=-\int_x^1 q'(t) dt,~~
\int_0^1 x^{k-1} q'^2(x) dx < \infty \right\}.
\end{equation}
The definition above is understood to mean that any $q\in \X$
is an absolutely continuous function on each compact
subinterval of $(0,1]$, whence $q'\in L^1_{\text{loc}}(0,1]$
and $q(x)$ exists as a Lebesgue integral of $q'$, and in view
of the second condition, $q'$ is also square-integrable on
$[0,1]$ with respect to the weight $x^{k-1}$.

\subsection{An estimation of the extremal value}\label{sec:estimationS(k)}
Before proceeding let us stop for a little further analysis,
establishing bondedness of $S(k)$, because this will be needed
in what follows.

Soundararajan \cite{S} remarks that ``the unfortunate
inequality'' $S(k)<4/k$ holds. This is
not completely obvious, but in fact the situation is even
worse, namely, $S(k)<4/(k+c\log k)$. This was essentially
proved (without an explicit calculation of $c$) in the
mentioned unpublished note of Soundararajan \cite{Se}. Together
with the mentioned example $P(x)=x^{k+\ell}$, $\ell=c\sqrt{k}$,
this shows that the value of $S(k)$ is between $4/(k+c'\log k)$
and $4/(k+c''\sqrt{k}$).

\begin{proposition}[{\bf Soundararajan}]\label{p:bound} The extremal problem
\eqref{Sproblem} is bounded by $4/k$. Moreover, we have
$S(k)<\frac{4}{k+\log_2 k -5}$ for all $k\geq 4$.
\end{proposition}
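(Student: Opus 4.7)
The plan is to combine integration by parts with Cauchy--Schwarz, first obtaining a weak bound and then sharpening it by quantifying the strictness of the inequality. For $q\in\X$ with $q(1)=0$, applying Cauchy--Schwarz to $q(x)=-\int_x^1 q'(t)\, dt$ with the weight split $t^{(k-1)/2}\cdot t^{-(k-1)/2}$ gives $q^2(x)\le \frac{x^{2-k}-1}{k-2}\int_0^1 t^{k-1}q'^2(t)\, dt$ for $k\ge 3$, so $x^{k-1}q^2(x)\to 0$ as $x\to 0^+$. Partial integration then yields the fundamental identity
\begin{equation*}
(k-1)N := (k-1)\int_0^1 x^{k-2} q^2(x)\, dx = -2\int_0^1 x^{k-1} q(x)q'(x)\, dx.
\end{equation*}
Writing $D:=\int_0^1 x^{k-1}q'^2\,dx$ and applying Cauchy--Schwarz with the split $x^{k-1}qq' = (x^{(k-2)/2}q)\cdot(x^{k/2}q')$ yields $\bigl(\tfrac{k-1}{2} N\bigr)^2 \le N\cdot \int_0^1 x^k q'^2\, dx$, hence $\int_0^1 x^k q'^2\, dx \ge \tfrac{1}{4}(k-1)^2 N$. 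Combined with the pointwise estimate $x^k\le x^{k-1}$ on $[0,1]$, this gives the weak bound $S(k)\le 4/(k-1)$.

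To sharpen to $S(k)\le 4/k$, the plan is to exploit the positive gap $D-\int_0^1 x^kq'^2\, dx = \int_0^1 x^{k-1}(1-x)q'^2\, dx$, which is strictly positive whenever $q'\not\equiv 0$. Applying Cauchy--Schwarz a second time to the basic identity, now with the split $(x^{(k-2)/2}(1-x)^{-1/2} q)\cdot(x^{k/2}(1-x)^{1/2}q')$, gives
\begin{equation*}
\bigl(\tfrac{k-1}{2}\,N\bigr)^{\!2} \le \int_0^1 \frac{x^{k-2}}{1-x}\, q^2\, dx \cdot \int_0^1 x^{k}(1-x)\, q'^2\, dx.
\end{equation*}
A Hardy-type inequality bounding $\int_0^1 x^{k-2}/(1-x)\, q^2\, dx$ by $(k-1)N$ (possible because $q(1)=0$ controls the singularity at $x=1$), together with the majorization $x^k(1-x)\le x^{k-1}(1-x)$, gives $\int_0^1 x^{k-1}(1-x)q'^2\, dx \ge \tfrac14(k-1)N$. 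Summing this with $4\int x^k q'^2\ge (k-1)^2 N$ produces $4D \ge k(k-1)N$, i.e.\ $S(k)\le 4/k$; strictness follows because equality in Cauchy--Schwarz would require $q\propto x$ on $(0,1)$, incompatible with $q(1)=0$ and $q\not\equiv 0$.

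The refined estimate $S(k)<4/(k+\log_2 k - 5)$ for $k\ge 4$ is obtained by iterating the scheme with the weight $(1-x)$ replaced by $(1-x)\log^{-1}(e/(1-x))$, contributing an extra $\log_2 k$ from the near-boundary behaviour at $x=1$; the constant $-5$ then arises from an explicit numerical optimization over the resulting weight parameters. The main obstacle is the sharp Hardy-type control $\int_0^1 x^{k-2}/(1-x)\, q^2\, dx \le (k-1)N$: the weight $(1-x)^{-1}$ has a genuine singularity, and pinning the precise constant $k-1$ (rather than a slightly larger one) is what distinguishes the bound $4/k$ from $4/(k-c)$; the extraction of the further $\log_2 k$ correction then requires careful tracking of the boundary-layer contribution where $q$ is small.
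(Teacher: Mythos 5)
Your opening steps are sound and coincide with the first round of the paper's argument: integration by parts gives $(k-1)\int_0^1 x^{k-2}q^2\,dx=-2\int_0^1 x^{k-1}qq'\,dx$, and one application of Cauchy--Schwarz with the split $x^{(k-2)/2}q\cdot x^{k/2}q'$ followed by $x^k\le x^{k-1}$ yields $S(k)\le 4/(k-1)$. The problem is the step that is supposed to upgrade this to $4/k$. Your ``Hardy-type inequality''
\begin{equation*}
\int_0^1 \frac{x^{k-2}}{1-x}\,q^2(x)\,dx\;\le\;(k-1)\int_0^1 x^{k-2}q^2(x)\,dx
\end{equation*}
is asserted, not proved, and it is in fact \emph{false} in the admissible class. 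Take $q_\ve(x)=\sqrt{\ve}$ for $x\le 1-\ve$ and $q_\ve(x)=(1-x)/\sqrt{\ve}$ for $x>1-\ve$; then $q_\ve(1)=0$ and $\int_0^1 x^{k-1}q_\ve'^2\,dx\asymp 1$, but the left-hand side is $\asymp \ve\log(1/\ve)$ while the right-hand side is $\asymp \ve$, so the ratio blows up as $\ve\to 0$. The condition $q(1)=0$ alone cannot tame the weight $(1-x)^{-1}$; any correct substitute must involve $q'$ (e.g.\ $q^2(x)/(1-x)\le\int_x^1 q'^2$, which leads to the bound $\int_0^1 x^{k-2}(1-x)^{-1}q^2\,dx\le D/(k-1)$ with $D=\int_0^1 x^{k-1}q'^2\,dx$ --- a quantity that is \emph{not} comparable to $(k-1)N$ in the way your argument needs). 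Since the inequality $\int_0^1 x^{k-1}(1-x)q'^2\,dx\ge\tfrac14(k-1)N$, and hence the conclusion $4D\ge k(k-1)N$, rests entirely on this false lemma, the bound $S(k)\le 4/k$ is not established by your route as written.

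The further refinement to $4/(k+\log_2 k-5)$ is only gestured at (``iterating the scheme with the weight $(1-x)\log^{-1}(e/(1-x))$'' plus ``numerical optimization''); no mechanism producing a base-$2$ logarithm is exhibited. For comparison, the paper obtains both improvements by \emph{iterating} your first step rather than introducing new weights: setting $I(m)=\int_0^1 x^m q^2\,dx$, partial integration and Cauchy--Schwarz give $I(m)\le\frac{2}{m+1}\sqrt{I(2m+1-k)\,L}$ with $L=D$, and running this recursion along the dyadic sequence $m_j=k+2^j-3$ yields $S(k)\le 4\prod_{j=1}^\infty(k+2^j-2)^{-2^{-j}}$. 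The bound $4/k$ falls out because every factor satisfies $k+2^j-2\ge k$, and the $\log_2 k$ gain comes from the roughly $\log_2(k-2)$ indices $j$ for which $2^j\le k-2$, each contributing a factor noticeably larger than $k$. If you want to salvage your approach, you would need either to prove a correct weighted substitute for the Hardy step (which, by the computation above, leads somewhere around $4/(k+O(1))$ but not obviously to a $\log k$ gain), or to switch to an iteration of the basic Cauchy--Schwarz step as the paper does.
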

\begin{proof} Let us fix, as in \eqref{RatioQ}, the value $n:=k-1$.
We are to show that whenever the denominator of \eqref{RatioQ}
exists finitely, but is nonzero (i.e. when $P^{(k)}=Q'\ne {\bf
0}$), then also the numerator (with the condition that
$Q(0)=0$, i.e. $P^{(k-1)}(0)=0$) exists finitely and, moreover,
the ratio admits the stated bounds.

Let us write now $q(x):=Q(1-x)$, assume that $Q$, hence also
$q$, are absolutely continuous, and consider the resulting
relations $q'(x)=-Q'(1-x)$, $q(1)=Q(0)=0$. These imply by
absolute continuity that $q(x)=q(x)-q(1)=-\int_x^1
q'(t)dt=\int_x^1 Q'(1-y)dy = [-Q(1-y)]_x^1 =Q(1-x)$--so we can
as well start with the conditions that $p(x):=q'(x)$ is
measurable and finite a.e., admits the weighted bound
$L:=\int_0^1 x^n q'^2(x) dx <\infty$ (coming from the
requirement that the denominator is finite), and also that $q'$
does not vanish a.e. (for the denominator being positive).
First let us check that then defining $q$ from the given
$p:=q'$ as $q(x):=-\int_x^1 q'(t)dt$ works, results in an
absolutely continuous function, and with this function the
numerator stays finite, bounded in terms of $L$.

Indeed, $\int_x^1|q'(t)|dt \leq \sqrt{\int_x^1 t^{-n} dt
\int_x^1 t^n |q'(t)|^2 dt } \leq \sqrt{ \dfrac{x^{1-n}}{n-1}
\int_0^1 t^n |q'(t)|^2 dt}
=\sqrt{\dfrac{L}{n-1}}x^{\frac{1-n}{2}}$ (valid for all $n\geq
2$) gives not only that $q(x)$, as a Lebesgue integral, exists
for all $x$, but also the estimate $|q(x)|^2\leq \dfrac{L}{n-1}
x^{1-n}$ on $(0,1]$. It follows that $q(x)$ is absolutely
continuous with derivative $p=q'$ a.e. Moreover, $\int_0^1 n
x^{n-1} q^2(x) dx \leq \dfrac{n}{n-1} L$, too, hence the
numerator in \eqref{RatioQ} is also finite and the quotient
cannot exceed $\dfrac{k-1}{k-2} \leq 2$ ($k\geq 3$).

For $k=2$, i.e. $n=1$, there is only a little difference in the
calculation, as then we obtain $|q(x)|\leq \sqrt{L |\log x|}$
and $\int_0^1 n x^{n-1} q^2(x) dx = \int_0^1 q^2(x) dx \leq L
\int_0^1 (-\log x) dx = L <2L$, extending the above bound to
all $k\geq 2$, too.

In the following we compute the stated sharper bound, too. For
any $m \geq n-1$, $I(m):= \int_0^1 x^m q^2(x) dx \leq \int_0^1
x^{n-1} q^2(x) dx \leq 2L<\infty$, as by condition we consider
the class of functions satisfying $0<L<\infty$ (with
$L:=\int_0^1 x^n q'^2(x)dx$).

Partial integration (using also $q(1)=Q(0)=0$) and
Cauchy--Schwarz inequality yield
\begin{equation*}
I(m)=\dfrac{-1}{m+1}  \int_0^1 x^{m+1} 2 q'(x)q(x) dx
\leq \dfrac{2}{m+1}\sqrt{I(2m+2-n) L}.
\end{equation*}
So starting from $m:=m_0:=k-2$ and continuing by induction with
$m_j:=k+2^j-3$ ($j=0,1,\dots,N$), we arrive at
\begin{equation*}
\frac{\int_0^1 x^{k-2} q^2(x) dx}{\int_0^1 x^{k-1} q'^2(x)dx} = \frac{I(m_0)}{L}\leq  \prod_{j=0}^{N} \left( \frac{2}{k+2^{j}-2}
\right)^{2^{-j}} \cdot \frac{I(k+2^{N+1}-3) ^{2^{-(N+1)}}} {L^{2^{-(N+1)}}}.
\end{equation*}
Since $0\leq I(\nu)$ is decreasing with $\nu$, $I(k+2^{N+1}-3)$
converges with $N$ so that we can pass to the limit
$N\to\infty$, and then even take supremum with respect to $q$,
obtaining
\begin{equation*}
S(k)\leq (k-1) \prod_{j=0}^{\infty} \left( \frac{2}{k+2^{j}-2}
\right)^{2^{-j}} = \frac{4}{\prod_{j=1}^{\infty}
\left(k+2^{j}-2\right)^{2^{-j}}} = \frac{4}{k-2}
\prod_{j=1}^{\infty}
\left(1+\frac{2^{j}}{k-2}\right)^{-2^{-j}}.
\end{equation*}
Observe that  for all $j\geq 1$ every single $k+2^{j}-2\geq k $
in the denominator of the last but one expression, hence
$S(k)\leq 4/k$ follows immediately. We can even sharpen this
estimate further. Let us denote the last product by $D:=D(k)$
and define $\ell:=[\log_2(k-2)]$. Then, by using $\log (1+x) >
x - \frac12 x^2$ (for $x>0$) we infer
\begin{align*}
\log D(k)  = - \sum_{j=1}^\infty
\frac{\log\left(1+\frac{2^{j}}{k-2} \right)}{2^j} & < -
\sum_{j=1}^{\ell} \frac{1}{2^j} \left(\frac{2^{j}}{k-2} -
\frac12\left( \frac{2^{j}}{k-2}\right)^2 \right) \\ & = -
\frac{\ell}{k-2} + \frac{2^{\ell}-1}{(k-2)^2} < -
\frac{\ell-1}{k-2}.
\end{align*}
Therefore, as $e^{-x} <\frac{1}{1+x}$ (for $x>0$), we obtain
\begin{equation*}
S(k)< \frac{4}{k-2} \cdot {\exp \left(- \frac{\ell-1}{k-2}
\right)} < \frac{4}{(k-2)\left( 1 + \frac{\ell-1}{k-2} \right)}
=\frac{4}{k +\ell-3} \leq \frac{4}{k +\log_2 k - 5},
\end{equation*}
since $\ell \geq \log_2 (k-2)
-1 \geq \log_2 k - 2$ for all $k\geq 4$.
\end{proof}

A further elementary observation is that for the
Cauchy--Schwarz inequality to be precise, we should have
$x^{m+1-n/2} q = c x^{n/2} q'$ in all the above applications of
the Cauchy--Schwarz estimate (i.e., for all occurring values of
$m$). This cannot hold for whatever choice of $q$ for all $m$
simultaneously. To have an about optimal estimate we may strive
for having the Cauchy--Schwarz estimate sharp at the very first
application, when $m=k-2$ and $n=k-1$, so $q=cq'$ follows, and
then $q(x)=e^{cx}$. But even that is not a valid choice in our
problem: $q(1)=0$ prevents us taking $q$ an exponential
function as it can never be zero. In any case, the estimate of
$S(k)$ above cannot be sharp.

\section{Existence and smoothness of maximizers in the extremal problem}\label{sec:maximum}

\subsection{Existence of maximizing functions in the extremal problem}\label{sec:eximaxi}
In this paper the role of $k$ is fixed. Furthermore, it will be
convenient for us to avoid repetitious use of $k-2$ and $k-1$,
so throughout the rest of the paper except for the last
section, Section \ref{sec:poly}, we will fix the notations for
two further integer parameters. So we define
\begin{equation}\label{mndef} m:=k-2, \qquad n:=k-1.
\end{equation}

As it is explained above, we can discuss the optimization
problem in the function space
\begin{equation}\label{Ydef}
\Y := \left\{p(t) \in L^1_{\text{loc}}(0,1] ~:~
\int_0^1 p^2(t) t^{n} dt < \infty \right\}, \quad
\textrm{where}\quad q(x)=-\int_x^1 p(t) dt .
\end{equation}
Multiplying the occurring functions by $t^{n/2}$, we can even
consider the space of functions
$\ff(t):=p(t)t^{n/2}=q'(t)t^{n/2}$, which then will be
square-integrable on $[0,1]$, so that $\ff \in
L^2[0,1]$.

Next let us establish, how the functional to be maximized looks
like over these spaces. On $\X$, on $\Y$ and finally on
$L^2[0,1]$ we must consider the respective equivalent
expressions
\begin{align}\label{maxexpressionslast}
&\frac{-2 \int_0^1 x^{k-1} q'(x)q(x)  dx }{\int_0^1 x^{k-1} q'^2(x) dx }
= \frac{2 \int_0^1 x^{n} p(x) \left( \int_x^1 p(t) dt\right) dx }{\int_0^1 p^2(x) x^{n} dx } \notag
\\ & \qquad = \frac{2 \int_0^1 \ff(x) x^{n/2} \left(\int_x^1 \ff(t) t^{-n/2} dt \right) dx}{\int_0^1 \ff^2(x) dx}
=\frac{ 2 \int_0^1 \int_0^1 \ff(x) \ff(t)  \chi_{t>x}(x,t) x^{n/2} t^{-n/2} dt dx}{\int_0^1 \ff^2(x) dx}
\notag\\ & \qquad = \frac{\int_0^1 \int_0^1 \ff(x) \ff(t)  K(x,t) dt dx}{\int_0^1 \ff^2(x) dx}
\qquad \text{with} \quad  K(x,t) := \left(\frac{\min(x,t)}{\max(x,t)}\right)^{n/2},
\end{align}
the last step being a technical one to bring the kernel $K$ to
a symmetric form.
So finally we find
that
\begin{equation}\label{SinL2}
S(k)=\sup_{L^2[0,1]\setminus \{ {\bf 0} \}}
\frac{\int_0^1 \int_0^1 \ff(x) \ff(t) K(x,t) dt dx}
{\int_0^1 \ff^2(x) dx} \quad \text{with} ~~
K(x,t) := \left(\frac{\min(x,t)}{\max(x,t)}\right)^{n/2}.
\end{equation}
Clearly, on $L^2[0,1]$ the functional in \eqref{maxexpressionslast} is defined everywhere
except $\ff = {\bf 0}$ (the zero function), and is bounded by
$4/k$, as proved before. Moreover, there is a clear homogeneity
property: The ratio for any $\ff$ is equal to the ratio for any
nonzero constant multiple $c\ff$, hence the ratio is constant
on all rays $\{ c\ff~:~ c\in\RR, c\ne 0\}$.

Therefore, the range of this quotient functional is clearly the
same on the whole space $L^2[0,1]\setminus \{ {\bf 0} \}$ as on
$B \setminus \{\bf 0\}$, $B$ denoting the unit ball $B:=\{
\ff\in L^2[0,1]~:~ \|\ff\|_2 \leq 1 \}$ (where the $2$-norm of
a function in $L^2[0,1]$ is $\|\ff\|_2:=({\int_0^1 \ff^2
(x) dx})^{1/2}$, as usual). Furthermore, actually already
\emph{on the unit sphere} $S:=\{ \ff\in L^2[0,1]~:~ \|\ff\|_2 =
1 \}$ the functional must take on all the values of its range.
However, on the unit sphere the denominator is exactly one, so
now we can modify the formulation and write
\begin{equation*}
S(k) = \sup_S  \int_0^1 \int_0^1 \ff(x) \ff(t)  K(x,t) dt dx =
\sup_{B} \int_0^1 \int_0^1 \ff(x) \ff(t)  K(x,t) dt dx.
\end{equation*}
Moreover, it is clear that in this
last formulation $S(k)$ is taken by a maximizer function $\ff
\in L^2[0,1]$ iff there is a maximum at some $\ff \in S$ iff
there is a maximum on $B$ (in which case again any maximum must
belong to $S$). So any maximizer $\ff$ in the original
formulation is maximizer together with all the ray $\{c\ff\}$
of its homothetic copies, and in the new formulation this
maximizer occurs exactly with $c=\pm 1/\|\ff\|_2$, i.e., at the
unit norm elements of the given ray.

This reformulation furnishes us the access to settle the
existence question of some maximizer. In our formulation of the
extremal problem  all functions are real-valued, only for the
next two propositions (spectral theory), and for the sake of
being precise, we shall need complex-valued functions.

\begin{proposition}
Let
\begin{equation*}
K(x,y) := \left(\frac{\min(x,y)}{\max(x,y)}\right)^{n/2},
\end{equation*}
and define the Fredholm-type operator
\begin{equation}\label{Tdef}
T : L^2([0,1];\CC)\to L^2([0,1];\CC)\quad (T\ff)(x):= \int_0^1 \ff(y) K(x,y) dy.
\end{equation}
Then $T$ is a compact, positive, self-adjoint operator on the complex Hilbert space $L^2([0,1];\CC)$, maps real-valued functions into real-valued ones, and preserves positivity.
\end{proposition}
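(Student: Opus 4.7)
The plan is to verify the four claimed properties in turn; the positivity of the quadratic form $\langle T\ff,\ff\rangle$ will be the step requiring the most care, while the remaining three are essentially standard.

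First, I would note that $K(x,y)=K(y,x)$ is symmetric, real-valued, bounded by $1$, and non-negative on $[0,1]^2$. Symmetry together with Fubini immediately yields self-adjointness of $T$ on $L^2([0,1];\CC)$. Real-valuedness of $K$ implies that $T$ sends real-valued functions to real-valued ones, and non-negativity of $K$ implies that $T$ maps a.e.\ non-negative functions to a.e.\ non-negative functions. Finally, $|K|\le 1$ on $[0,1]^2$ gives $K\in L^2([0,1]^2)$, so $T$ is a Hilbert--Schmidt operator, hence compact.

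The substantive part is showing $\langle T\ff,\ff\rangle\ge 0$ for every $\ff\in L^2([0,1];\CC)$. Writing $\ff=u+iv$ with $u,v$ real and using that $K$ is real and symmetric, the cross terms $\langle Tv,u\rangle$ and $\langle Tu,v\rangle$ coincide, so $\langle T\ff,\ff\rangle=\langle Tu,u\rangle+\langle Tv,v\rangle$ and it suffices to treat real $\ff$. For real $\ff$ I would reverse the chain of identities culminating in \eqref{maxexpressionslast} to obtain
\begin{equation*}
\langle T\ff,\ff\rangle=2\int_0^1 \ff(x)\, x^{n/2} F(x)\, dx,\qquad F(x):=\int_x^1 \ff(t)\, t^{-n/2}\, dt.
\end{equation*}
Since $F'(x)=-\ff(x)x^{-n/2}$ a.e., the integrand equals $-x^n (F^2)'(x)$, and integration by parts gives $\langle T\ff,\ff\rangle=-\bigl[x^n F^2(x)\bigr]_0^1+n\int_0^1 x^{n-1}F^2(x)\,dx$.

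The point requiring care is the vanishing of the boundary term: at $x=1$ this is automatic since $F(1)=0$, but at $x=0$ one must control the decay of $F$ against the growth of $x^n$. By Cauchy--Schwarz, $|F(x)|^2\le \|\ff\|_2^2 \int_x^1 t^{-n}\,dt$, which for $n\ge 2$ gives $x^n F^2(x)=O(x)$ and for $n=1$ gives $xF^2(x)=O(x\log(1/x))$; in either case the boundary contribution at $0$ vanishes. This leaves $\langle T\ff,\ff\rangle=n\int_0^1 x^{n-1}F^2(x)\,dx\ge 0$, completing the verification of positivity.
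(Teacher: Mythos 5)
Your proof is correct, and it is in fact more complete than the paper's, which disposes of everything in two sentences: compactness is cited from Riesz--Sz.-Nagy for bounded kernels (your Hilbert--Schmidt argument via $K\in L^\infty\subset L^2([0,1]^2)$ is the same standard fact), and the remaining properties are declared to ``follow evidently'' from $0\le K\le 1$ and symmetry. That dismissal is fine for self-adjointness, real-preservation and positivity-preservation, but operator positivity $\langle T\ff,\ff\rangle\ge 0$ does \emph{not} follow merely from a symmetric kernel with values in $[0,1]$, so the one step you rightly flag as substantive is exactly the one the paper leaves implicit. Your integration-by-parts identity $\langle T\ff,\ff\rangle=n\int_0^1 x^{n-1}F^2(x)\,dx$ is precisely the chain \eqref{maxexpressionslast} run backwards to the nonnegative numerator $(k-1)\int_0^1 x^{k-2}q^2\,dx$, so in substance you are using the same mechanism the paper relies on, just spelled out; your Cauchy--Schwarz control of the boundary term at $0$ (including the separate $n=1$ case) is also the same estimate the paper uses in Proposition \ref{p:bound}. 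An alternative one-line route, if you want it: $K(x,y)=n\int_0^1 t^{n-1}\bigl(x^{-n/2}\chi_{\{t<x\}}\bigr)\bigl(y^{-n/2}\chi_{\{t<y\}}\bigr)\,dt$ exhibits $K$ as an integral of rank-one positive kernels, giving $T\ge 0$ directly.
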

\begin{proof}
Since $K\in L^\infty([0,1]\times [0,1])$, $T$ is compact, see \cite[\S97]{RSz}. Since $0\leq K\leq 1$  and $K$ is symmetric, the other two properties follow evidently.
\end{proof}

\begin{proposition}\label{p:maximum} $S(k)$ is attained as a maximum by some maximizing function $\ff\in L^2[0,1]$.

\medskip\noindent Equivalently,
\begin{equation*}S(k)=\frac{-2 \smallint_0^1 x^{k-1} q'(x)
q(x)  dx }{\smallint_0^1 x^{k-1} q'^2(x) dx}\end{equation*} for
some appropriate $q\in \X$ with $q(1)=0$ and $q\not\equiv {\bf
0}$.
\end{proposition}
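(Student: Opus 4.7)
The plan is to recognize the problem \eqref{SinL2} as a textbook variational problem for the compact positive self-adjoint operator $T$, whose supremum is automatically attained by the spectral theorem. Indeed, the chain of equalities in \eqref{maxexpressionslast} together with the unit-sphere reduction give
\begin{equation*}
S(k) \;=\; \sup_{\ff\in L^2[0,1],\ \|\ff\|_2=1}\ \langle T\ff,\ff\rangle,
\end{equation*}
so the existence of a maximizing $\ff$ is the statement that this quadratic form attains its supremum on the unit sphere.

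First I would invoke the spectral theorem for compact self-adjoint operators on $L^2([0,1];\CC)$: by the previous proposition $T$ is such an operator, hence it admits an orthonormal basis of eigenfunctions $\{\ff_j\}$ with real eigenvalues $\{\EV_j\}$, $\EV_j \to 0$, and $\|T\|=\max_j|\EV_j|$. Positivity of $T$ forces $\EV_j\geq 0$, and so the largest eigenvalue $\EV_1$ satisfies $\EV_1=\|T\|=\sup_{\|\ff\|_2=1}\langle T\ff,\ff\rangle = S(k)$, with the supremum attained at any normalized eigenfunction $\ff_1$ for $\EV_1$. Since $T$ maps real-valued functions to real-valued functions, splitting $\ff_1=u+iv$ into real and imaginary parts gives $Tu=\EV_1 u$ and $Tv=\EV_1 v$ separately; hence at least one of $u,v$ is a nonzero real eigenfunction for $\EV_1$, which after normalization gives a real maximizer $\ff\in L^2[0,1]$, $\|\ff\|_2=1$, $\langle T\ff,\ff\rangle=S(k)$.

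Next I would translate this maximizer back through the chain of reformulations of Section~\ref{sec:reformulations}. Setting $p(t):=\ff(t)t^{-n/2}$, the finiteness $\int_0^1 t^n p^2(t)\,dt = \|\ff\|_2^2=1<\infty$ together with Cauchy--Schwarz on any $[a,1]\subset (0,1]$ shows $p\in L^1_{\mathrm{loc}}(0,1]$, so the function $q(x):=-\int_x^1 p(t)\,dt$ belongs to $\X$ with $q'=p$ a.e., $q(1)=0$, and $q\not\equiv\mathbf 0$ (otherwise $p\equiv 0$ and $\ff=\mathbf 0$). Plugging into \eqref{maxexpressionslast} shows that this $q$ attains the supremum in \eqref{qMaxProblem}, which is the equivalent statement of the proposition.

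The only real obstacle is conceptual rather than technical: one must be sure that the passage to $L^2[0,1]$ was faithful, i.e.\ that nothing is lost by enlarging the admissible class to all of $\X$, and that the maximizer does not accidentally sit in the ``singular hyperplane'' $\mathcal H=\{q:\int_0^1 q(1-y) y^{k-2}/(k-2)!\,dy=0\}$ on which the original normalization $P(1)=1$ fails. Both concerns are handled in Section~\ref{sec:reformulations}: the extremal ratio is invariant under positive multiples, and since the maximizer $\ff$ will eventually be shown (via the differential equation of Section~\ref{sec:DiffEq}) to be of one sign, the corresponding $q$ satisfies $I(q)>0$, so no rescaling issue occurs. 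Thus compactness plus positivity of $T$ is all one needs, and the existence statement reduces to a direct application of the spectral theorem.
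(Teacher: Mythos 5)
Your proof is correct and follows essentially the same route as the paper: identify $S(k)$ with $\sup_{\|\ff\|_2=1}\langle T\ff,\ff\rangle$, invoke the spectral theorem for the compact positive self-adjoint operator $T$ to see that this supremum equals the largest eigenvalue $\EV_1=\|T\|$ and is attained at a (real-valued) normalized eigenfunction, then translate back to $q\in\X$. The additional remarks on the real/imaginary splitting and on the singular hyperplane $\mathcal H$ are sound and correspond to points the paper handles in Remark \ref{uniqremark} and in Section \ref{sec:reformulations}, respectively.
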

\begin{proof}
Consider the operator $T$ as in \eqref{Tdef}.
\begin{equation}\label{Adef}
A(\ff, \psi) := \int_0^1 \int_0^1 \ff(y) \overline{\psi}(x) K(x,y) dxdy=\langle T\ff,\psi\rangle,
\end{equation}
which is a sesquilinear form on $L^2([0,1];\CC)$. By \cite[\S93]{RSz} we have
\begin{equation*}
\|T\| := \sup_{\|\ff\|_2\leq 1} \|T\ff\| = \sup_{\|\ff\|_2\leq 1} |\langle T\ff,\ff \rangle| = \sup_{\|\ff\|_2\leq 1} A(\ff,\ff).
\end{equation*}
Since $T$ is compact, positive and self-adjoint, all of its
eigenvalues are nonnegative, moreover, the eigenvalues can be
ordered in a decreasing null-sequence $(\lambda_j)$,
$\lambda_1>\dots>\lambda_j>\dots$, $\lambda_j\to 0$
($j\to\infty$), and we also have
\begin{equation*}
\|T\|=\max\{\EV~:~\mbox{$\EV$ is an eigenvalue of $T$}\}=:\EV_{1}.
\end{equation*}
Since $T$ leaves the subspace of real-valued functions
invariant, for any eigenvalue $\EV\in \RR$ of $T$ there is a
real-valued eigenfunction. Summing
up, 
$\|T\|=\EV_1$, and there exists some (nonzero) eigenfunction
$\ff\in L^2[0,1]$ satisfying $\|\ff\|=1$ and $\EV_1= \|T\| =
A(\ff,\ff)$, yielding a maximizer for $A(\ff,\ff)$ as asserted.
\end{proof}
\begin{remark}\label{uniqremark}
The above proof yields also the following important
information: $S(k)$ is { the largest} eigenvalue $\lambda_1$ of
$T$, and any (normalized) eigenfunction $\ff$ of $T$ {belonging to $\lambda_1$} is a maximizer; moreover, the only
maximizers are nonzero eigenfunctions of $T$ corresponding to
$\EV_1=\|T\|$.

{ Indeed, as} $T$ is compact and self-adjoint, there is an
orthonormal basis $(e_j)$ in $L^2[0,1]$ that consists of
eigenfunctions of $T$. Let $\mathbf{0}\neq\varphi\in L^2[0,1]$
be not an eigenfunction to the eigenvalue $\EV_1$. Then
$\ff=\sum_{j=1}^\infty \langle \ff, e_j\rangle e_j$ and
\begin{equation*}
\langle T\ff,\ff \rangle=\sum_{j=1}^\infty\EV_j
|\langle \ff, e_j\rangle|^2<\EV_{1}\|\ff\|_2
\end{equation*}
by Parseval's identity, where for the strict inequality ``$<$''
we have used that for some $j>1$ we have $|\langle \ff,
e_j\rangle|>0$, while $\EV_j<\EV_1$. 

We also remark that since $K$ is
strictly positive, so is the operator $T$, hence one knows from
Perron--Frobenius theory (see \cite[Sec.~4.2]{M}) that the dominant eigenvalue $\lambda_1$ is simple with a corresponding strictly positive  eigenfunction. This will be proved
later also by directly determining all eigenfunctions of $T$.
\end{remark}
Next we turn to smoothness properties of eigenfunctions of $T$.
\subsection{Smoothness of maximizers and maximizers in $C[0,1]$}\label{sec:smooth}
The above formulation also provides us a direct access to further smoothness statements.

\begin{lemma}\label{l:range} The Fredholm-type operator $T$ defined in \eqref{Tdef} maps
$L^2[0,1]$ to the subspace $C_0(0,1]$ of continuous functions
with value $0$ at $0$\footnote{Note that we identify functions
defined on $(0,1]$ only but having limit 0 towards the boundary
point 0 with functions continuously extended to 0 by defining
their value at 0 as 0.}.
\end{lemma}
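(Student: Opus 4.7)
The plan is to split the kernel at $y=x$ and handle continuity on $(0,1]$ and the boundary behavior at $0$ separately. Write, for $\ff \in L^2[0,1]$,
\begin{equation*}
(T\ff)(x) = x^{-n/2}\int_0^x y^{n/2}\ff(y)\,dy \;+\; x^{n/2}\int_x^1 y^{-n/2}\ff(y)\,dy,
\end{equation*}
which simply makes the two branches of $K(x,y)=(\min(x,y)/\max(x,y))^{n/2}$ explicit.

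First I would establish continuity of $T\ff$ on $(0,1]$. Fix $x_0\in(0,1]$. For every $y\in(0,1]\setminus\{x_0\}$ the function $x\mapsto K(x,y)$ is continuous at $x_0$, and $|K(x,y)|\le 1$ uniformly. Since $\ff\in L^2[0,1]\subset L^1[0,1]$, we have the $\ff$-integrable bound $|\ff(y)K(x,y)|\le|\ff(y)|$, and Lebesgue's dominated convergence theorem gives $(T\ff)(x)\to (T\ff)(x_0)$ as $x\to x_0$. This yields $T\ff\in C(0,1]$.

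The main step is the boundary behavior at $0$. For the first piece, the Cauchy--Schwarz inequality gives
\begin{equation*}
\Bigl|x^{-n/2}\int_0^x y^{n/2}\ff(y)\,dy\Bigr| \le x^{-n/2}\Bigl(\int_0^x y^n\,dy\Bigr)^{1/2}\Bigl(\int_0^x \ff^2\Bigr)^{1/2} = \frac{\sqrt{x}}{\sqrt{n+1}}\Bigl(\int_0^x \ff^2\Bigr)^{1/2},
\end{equation*}
which tends to $0$ as $x\to 0^+$ (the second factor vanishes by absolute continuity of the Lebesgue integral). For the second piece, Cauchy--Schwarz yields
\begin{equation*}
\Bigl|x^{n/2}\int_x^1 y^{-n/2}\ff(y)\,dy\Bigr| \le x^{n/2}\Bigl(\int_x^1 y^{-n}\,dy\Bigr)^{1/2}\|\ff\|_2.
\end{equation*}
For $n\ge 2$ we have $\int_x^1 y^{-n}\,dy \le x^{1-n}/(n-1)$, so the bound is at most $\sqrt{x/(n-1)}\,\|\ff\|_2\to 0$. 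For $n=1$ (i.e.\ $k=2$) we get $x^{1/2}\sqrt{-\log x}\,\|\ff\|_2\to 0$ instead. In both cases both pieces vanish, so $(T\ff)(x)\to 0$ as $x\to 0^+$, and we may extend $T\ff$ continuously to $[0,1]$ by setting $(T\ff)(0):=0$, i.e.\ $T\ff\in C_0(0,1]$.

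I do not expect any real obstacle beyond the $n=1$ edge case, which only requires the logarithmic variant of the estimate; all other bounds are routine applications of Cauchy--Schwarz and dominated convergence, using only that $K$ is bounded by $1$ and piecewise a power of $\min/\max$.
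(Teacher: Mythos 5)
Your proof is correct. The continuity argument on $(0,1]$ is exactly the paper's: dominated convergence using the uniform majorant $|\ff(y)|\in L^1[0,1]$ and the separate continuity of $K(\cdot,y)$. Where you diverge is the behavior at $0$: you split the kernel at $y=x$ and derive quantitative Cauchy--Schwarz bounds, getting $|(T\ff)(x)|\lesssim \sqrt{x}$ (up to the local $L^2$ mass of $\ff$ and, for $n=1$, a factor $\sqrt{-\log x}$). The paper instead observes that the same dominated convergence argument already works at $x_0=0$: for each fixed $y>0$ one has $K(x,y)=(x/y)^{n/2}\to 0=K(0,y)$ as $x\to 0^+$, so $(T\ff)(x)\to\int_0^1 \ff(y)\,K(0,y)\,dy=0$ with no further estimate. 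Your route costs a little more work (and forces you to track the $n=1$ edge case, which the paper's qualitative argument sidesteps entirely), but it buys an explicit rate of decay at $0$, which the paper's argument does not provide; both are valid, and your estimates are all checked correctly.
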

\begin{proof}
Since $L^2[0,1]\subset L^1[0,1]$, and $0\leq K(x,y)\leq 1$, the
expression $(T\ff)(x)= \int_0^1 \ff(y) K(x,y) dy$ is an
integral with a uniform majorant $|\ff(y)|\in L^1[0,1]$ of the
integrands. Hence by the Lebesgue Dominated Convergence
Theorem, it suffices to take the pointwise limit under the
integral sign. When $x\to x_0$, this gives for all $y>0$
$\lim_{x\to x_0} K(x,y)=K(x_0,y)$, while for $y=0$ we have
$K(x,0)=K(x_0,0)=0$ identically. (Essentially, we have used
only separate continuity of $K$ on $[0,1]\times [0,1]$.) Thus
$\lim_{x\to x_0} (T\ff)(x)=\int_0^1 \ff(y) \lim_{x\to x_0}
K(x,y) dy=\int_0^1 \ff(y) K(x_0,y) dy=(T\ff)(x_0)$, i.e.,
$T\ff\in C[0,1]$. By definition, $K(0,y)=0$ for all
$y\in[0,1]$, hence for every $\ff\in L^2[0,1]$ we have
$(T\ff)(0)=0$.
\end{proof}

\begin{corollary}\label{p:contmax}
All eigenfunctions $\ff$ of the Fredholm-type operator $T$ defined in \eqref{Tdef} are continuous and fulfill
$\ff(0)=0$.

\medskip\noindent
Equivalently, in the function space $\X$ defined in
\eqref{Xdef} all the functions $q(x)=-\int_x^1
q'(t)dt=-\int_x^1 t^{-n/2} \ff (t) dt$ corresponding to
eigenfunctions $\ff$ of $T$ satisfy
$\ff(x)=x^{n/2} q'(x) \in C_0(0,1]$ and thus $x^{n/2-1}q(x) \in
C_0(0,1]$.
\end{corollary}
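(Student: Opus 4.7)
The proof is essentially immediate from Lemma \ref{l:range}: if $\varphi$ is an eigenfunction of $T$ with eigenvalue $\lambda$, then $\lambda\neq 0$ (the eigenvalues of interest are the nonzero $\lambda_j$ of Remark \ref{uniqremark}, and for the dominant one $\lambda_1>0$ by positivity of $K$), so we may write $\varphi=\lambda^{-1}T\varphi$. Lemma \ref{l:range} then gives $\varphi\in C_0(0,1]$, which proves both the continuity of $\varphi$ and the boundary value $\varphi(0)=0$.

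For the equivalent reformulation, the relation $\varphi(x)=x^{n/2}q'(x)$ immediately translates $\varphi\in C_0(0,1]$ into $x^{n/2}q'(x)\in C_0(0,1]$. The only nontrivial point is to derive from this that also $x^{n/2-1}q(x)\in C_0(0,1]$. Continuity on $(0,1]$ is automatic, since $q(x)=-\int_x^1 t^{-n/2}\varphi(t)\,dt$ is continuous (in fact absolutely continuous) on $(0,1]$ and $x^{n/2-1}$ is continuous there. The only thing to check is the boundary behaviour at $0$.

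The plan for the boundary behaviour is a direct estimate. Fix $\varepsilon>0$ and, using $\varphi\in C_0(0,1]$, choose $\delta\in(0,1]$ with $|\varphi(t)|\le\varepsilon$ for $t\in(0,\delta]$, and set $M_\delta:=\int_\delta^1 t^{-n/2}|\varphi(t)|\,dt<\infty$. For $n\ge 3$, one has the elementary bound
\begin{equation*}
|q(x)|\ \le\ \varepsilon\int_x^\delta t^{-n/2}\,dt+M_\delta\ \le\ \frac{\varepsilon\, x^{1-n/2}}{n/2-1}+M_\delta,
\end{equation*}
so that $x^{n/2-1}|q(x)|\le \varepsilon/(n/2-1)+M_\delta\, x^{n/2-1}\to \varepsilon/(n/2-1)$ as $x\to 0^+$. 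Since $\varepsilon$ was arbitrary, $x^{n/2-1}q(x)\to 0$ as claimed. The borderline values $k=2,3$ can be handled analogously, with the estimate for $\int_x^\delta t^{-n/2}\,dt$ replaced by its logarithmic or bounded counterpart, and combined with the fact that from the eigenvalue relation one has additional structure, namely $q(x)+\lambda q'(x)=x^{-n}\int_0^x y^{n/2}\varphi(y)\,dy=o(x^{1-n/2})$, which again forces $x^{n/2-1}q(x)\to 0$.

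The main obstacle, such as it is, lies in this last step: the continuity of $\varphi$ at $0$ alone does not immediately transfer to a weighted continuity statement for $q$, because the integrand $t^{-n/2}\varphi(t)$ is singular at $0$. The point is that the vanishing of $\varphi$ at $0$ is exactly enough to control this singularity, as the estimate above shows. No deeper spectral input is required for the first claim; the eigenvalue equation only enters (via the aforementioned identity) to cover the small-$k$ edge cases.
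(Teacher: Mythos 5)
Your argument is correct and essentially the same as the paper's: the first claim is obtained exactly as in the paper by writing the eigenfunction as $\lambda^{-1}T\varphi$ and invoking Lemma \ref{l:range}, and your $\varepsilon$--$\delta$ estimate for $x^{n/2-1}q(x)$ is precisely the computation the paper carries out in its footnote (your bound $\varepsilon/(n/2-1)$ is the paper's $2\varepsilon/(n-2)$). The one point of divergence is your closing remark on $k=2,3$: the paper silently assumes $n>2$ (it divides by $n-2$), and for $k=2,3$ the conclusion $x^{n/2-1}q(x)\in C_0(0,1]$ actually fails for the genuine eigenfunctions $q(x)=x^{-m/2}J_m(\alpha\sqrt{x})$ (there $x^{n/2-1}q(x)$ tends to a nonzero constant, or diverges, at $0$), so the claim that these borderline cases ``can be handled analogously'' should be deleted rather than repaired.
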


\begin{proof} All eigenfunctions lie in the range of the operator $T$,
hence belong to $C_0(0,1]$ in view of Lemma \ref{l:range}.

\medskip\noindent
Recall that the correspondence between $L^2[0,1]$ and our
spaces $\Y$ and $\X$ was given by $\ff(x)= x^{n/2} p(x) =
x^{n/2} q'(x)$. Thus for $\ff\in L^2[0,1]$, an eigenfunction of
$T$, we obtain for the corresponding $q$ that $x^{n/2}q'(x)\in
C_0(0,1]$, whence also $q'\in C(0,1]$ follows. Moreover,
$\lim_{x\to 0+} x^{n/2} q'(x)= \lim_{x\to 0+} \ff(x) =
\ff(0)=0$, providing a continuous extension of $x^{n/2}q'(x)$
even to $0$. Now writing $q(x)=-\int_x^1 q'(t) dt$ yields $q\in
C(0,1]$. While for $x\to 0+$ we obtain that\footnote{Here
$o(1)$ means that for any $\ve>0$ we have some $\de$ such that
for $0\leq t\leq \delta$, $|q'(t)|<\ve t^{-n/2}$. Therefore, we
have for any $0<x<\de$ the estimate $|q(x)-q(\de)|\leq
\int_x^\de \ve t^{-n/2} dt < \dfrac{2\ve}{n-2} x^{1-n/2}$,
while $x^{n/2-1}q(\de) \to 0$ as $x\to 0+$. Therefore,
$|x^{n/2-1}q(x)|\leq \dfrac{2\ve}{n-2} +o(1)$ and finally
$x^{n/2-1}q(x)\to 0$ with $x\to 0+$.}
\begin{equation*}
\lim_{x\to 0+} x^{n/2-1} q(x) = \lim_{x\to 0+} x^{n/2-1}
\left( -\int_x^1 o(1)t^{-n/2} dt\right) = \lim_{x\to 0+} o(1) \frac{2}{n-2} = 0,
\end{equation*}
hence $x^{n/2-1} q(x) \in C_0(0,1]$.
\end{proof}
Although $K$ is not everywhere continuous on
$[0,1]\times[0,1]$, the operator $T$ can still be approximated
by compact operators given by continuous kernels.
\begin{proposition} The operator $T$ restricted to $C[0,1]$ is a compact $C[0,1]\to
C[0,1]$ operator with exactly the same eigenvalues and eigenfunctions as on $L^2[0,1]$.
\end{proposition}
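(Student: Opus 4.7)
The assertion consists of three substatements: (i) $T$ indeed maps $C[0,1]$ into itself; (ii) the restricted operator is compact; (iii) the $C[0,1]$- and $L^2[0,1]$-eigenvalues (in fact the eigenfunctions themselves) coincide. Parts (i) and (iii) are essentially handed to us by Lemma~\ref{l:range} and Corollary~\ref{p:contmax}.

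For (i), since $C[0,1]\subset L^2[0,1]$, Lemma~\ref{l:range} gives immediately $T(C[0,1])\subset C_0(0,1]\subset C[0,1]$; and since $0\le K\le 1$, the operator acts as a contraction on $(C[0,1],\|\cdot\|_\infty)$. For (iii), every $L^2$-eigenfunction $\ff$ of $T$ already lies in $C_0(0,1]\subset C[0,1]$ by Corollary~\ref{p:contmax}, and is therefore an eigenfunction of $T|_{C[0,1]}$ for the same eigenvalue; conversely, any $C[0,1]$-eigenfunction automatically belongs to $L^2[0,1]$ and the identity $T\ff=\EV\ff$ transfers verbatim. Thus the two operators share the same eigenvalues and, more precisely, the same eigenfunctions.

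The main substance is (ii), and the difficulty is that $K$ is continuous on $[0,1]^2\setminus\{(0,0)\}$ but fails to be continuous at the origin (along the diagonal $K(t,t)\equiv 1$, while $K(t,0)\equiv 0$), so the textbook Arzelà-Ascoli argument for continuous kernels does not apply literally. The idea is to approximate $T$ in the operator norm $\|\cdot\|_{C\to C}$ by compact operators with continuous kernels. Pick any continuous cutoff $h_\ve\colon[0,1]\to[0,1]$ with $h_\ve\equiv 0$ on $[0,\ve/2]$ and $h_\ve\equiv 1$ on $[\ve,1]$, and set
\begin{equation*}
K_\ve(x,y):=h_\ve\bigl(\max(x,y)\bigr)\,K(x,y).
\end{equation*}
Then $K_\ve$ vanishes identically on a neighborhood of $(0,0)$ and coincides with the continuous function $K$ outside it, so $K_\ve\in C([0,1]\times[0,1])$; consequently the associated Fredholm operator $T_\ve$ is compact $C[0,1]\to C[0,1]$ by the standard equicontinuity argument.

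It remains to estimate $\|T-T_\ve\|_{C\to C}$. For any $\ff\in C[0,1]$ with $\|\ff\|_\infty\le 1$,
\begin{equation*}
\bigl|((T-T_\ve)\ff)(x)\bigr|\le \int_0^1 \bigl[1-h_\ve(\max(x,y))\bigr]\,K(x,y)\,dy \le \int_{\{y\,:\,\max(x,y)<\ve\}}\!\!1\,dy.
\end{equation*}
If $x\ge\ve$ the integration set is empty; if $x<\ve$ it reduces to $[0,\ve)$ so the integral is bounded by $\ve$. Hence $\|T-T_\ve\|_{C\to C}\le\ve$, and $T$ is the uniform limit of compact operators on $C[0,1]$, whence compact itself. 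This completes (ii). I expect the cutoff construction together with the norm estimate above to be the only genuinely non-routine point; everything else is a bookkeeping consequence of the preceding lemma and corollary.
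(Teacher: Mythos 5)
Your proof is correct and takes essentially the same route as the paper: the paper also approximates $T$ in the $C[0,1]$ operator norm by compact operators whose kernels are $K$ multiplied by a continuous cutoff vanishing near the origin and equal to $1$ where $\max(x,y)$ is bounded away from $0$, and handles the eigenvalue/eigenfunction correspondence exactly as you do via Corollary~\ref{p:contmax}. The only difference is cosmetic: you write out the norm estimate $\|T-T_\ve\|_{C\to C}\le\ve$ explicitly, which the paper dismisses as ``easy to see.''
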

\begin{proof}
Let $f_j:[0,1]\times [0,1]\to [0,1]$ be continuous with
$f_j(x,y)=0$ if $x,y\leq \frac{1}{2j}$ and $f_j(x,y)=1$
if $\max(x,y) \geq \frac1{j}$. Then $f_j K$ is continuous,
hence the integral operator $T_j$ with kernel $f_j K$ is
compact, see, e.g., \cite[\S90]{RSz}. It is easy to see that
$T_j\to T$ in the operator norm (over $C[0,1]$), hence $T$
itself is compact, see \cite[\S76]{RSz}.

\medskip\noindent By Proposition \ref{p:contmax} all eigenfunctions of $T$ on
$L^2[0,1]$ belong also to $C_0(0,1]$, hence remain
eigenfunctions when $T$ is considered as $C[0,1]\to C[0,1]$.
The converse is obvious: Every continuous eigenfunction is of
course also an eigenfunction from $L^2[0,1]$. In particular,
the set of eigenvalues are also exactly the same when
considered in these two spaces.
\end{proof}
\begin{remark}
One can show that the norm of $T$ as an operator on $C[0,1]$ is
\begin{equation}\label{eq:Tnorm}
\|T\|_{C[0,1]}=\left(\frac{4}{n+2}\right)^{\frac{n}{n-2}}=\left(\frac{4}{k+1}\right)^{\frac{k-1}{k-3}}.
\end{equation}
In fact, since $T$ is positivity preserving, we have
$\|T\|_{C[0,1]}=\|T \mathbf{1}\|_\infty$, where $\mathbf{1}$ is
the constant $1$ function. Easy calculation shows that
$(T\mathbf{1})(x)=\frac{4n}{n^2-4}x-\frac{2}{n-2}x^{n/2}$ and
that this function has maximum at
$x=(\frac{2}{n+2})^{2/(n-2)}$. Since we already know
$\|T\|_{L^2[0,1]}=\EV_{1}$, and in general $\EV_{1}\leq
\|T\|_{C[0,1]}$, we obtain that the maximum of $A(\ff,\ff)$
with $\|\ff\|_2=1$, i.e., $S(k)$ is smaller than the constant
in \eqref{eq:Tnorm} above.
\end{remark}

\subsection{Differentiability of maximizers}\label{sec:diffmax}
We now push further the smoothness statements from the last
subsection. We need some preparations, and define the following
auxiliary functions
\begin{equation}\label{kappadef}
\kappa(x,y):=\begin{cases} \frac{\min(x,y)}{\max(x,y)} \qquad
&\text{if} \quad (0,0)\ne (x,y) \in [0,1]^2 ,\\
 0 & \text{if} \quad (x,y)=(0,0), \end{cases}
\end{equation}
and
\begin{equation}\label{omdef}
\om(a,b):= \begin{cases}\frac{\sum_{j=0}^{n-1}
a^{j/2}b^{(n-1-j)/2}}{\sqrt{b}+\sqrt{a}} \qquad & \text{if}
\quad (0,0)\ne (a,b)\in [0,1]^2, \\ 0 & \text{if} \quad
(a,b)=(0,0). \end{cases}
\end{equation}
With these notations we have for every $0<x,y\leq 1$ the formula
\begin{equation}\label{Kkappaomega}
\frac{n}{2}K(x,y)=\omega(\kappa(x,y),\kappa(x,y)) \kappa(x,y),
\end{equation}
which also holds for $x$ or $y$ being $0$, with both sides
vanishing.

Now note that $0 \leq \om(a,b) \leq n \max(a,b)^{n/2-1} \leq
n$. Furthermore, observe that for $0\leq a, b\leq 1$ ({even if}
both are zero) we have $b^{n/2}-a^{n/2} =
(\sqrt{b}-\sqrt{a})\sum_{j=0}^{n-1} a^{j/2}b^{(n-1-j)/2} =
(b-a)\om(a,b)$. Hence { for any $y,x,x'\in[0,1]$} we can write
\begin{align}\label{Kkappaom}
K(x',y)-K(x,y)
&=\om\big(\kappa(x',y),\kappa(x,y)\big) \cdot \big(\kappa(x',y) - \kappa(x,y) \big).
\end{align}
Fix $x>0$. Denoting $\D:=x'-x>0$, we also have
\begin{equation}\label{kappa}
\kappa(x',y)-\kappa(x,y) =\begin{cases}
\frac{y}{x'}-\frac{y}{x} = - \frac{y\D}{xx'} \qquad\qquad &
\text{if}~~ y<x<x', \\ \frac{y}{x'}-\frac{x}{y}=
\frac{y^2-xx'}{yx'} \qquad & \text{if}~~ x \leq y\leq x', \\
\frac{x'}{y}-\frac{x}{y} = \frac{\D}{y} \qquad\qquad\qquad &
\text{if}~~ x<x'<y,
\end{cases}
\end{equation}
whence
\begin{equation}\label{kappaesti}
|\kappa(x',y)-\kappa(x,y)| =\begin{cases}
\frac{y}{x}-\frac{y}{x'} = \frac{y\D}{xx'}\leq \frac{\D}{x'}\leq \frac{\D}{x}    \qquad \qquad\qquad
& \text{if}~~ y<x<x', \\ \left|\frac{x}{y}-\frac{y}{x'}
\right|= \frac{|xx'-y^2|}{yx'} \leq \frac{\D}{y}\leq \frac{\D}{x}   \qquad &
\text{if}~~ x \leq y\leq x', \\ \frac{x'}{y}-\frac{x}{y} =
\frac{\D}{y} \leq \frac{\D}{x'} \leq \frac{\D}{x}  \qquad\qquad\qquad & \text{if}~~ x<x'<y.
\end{cases}
\end{equation}
\begin{lemma}\label{l:C1range} The operator $T$
maps $L^2[0,1]$ to the space $C^1(0,1]$. Furthermore, for $\ff\in L^2[0,1]$ and $x\in(0,1]$
\begin{equation}\label{Tffprime}
(T\ff)'(x) = \int_0^1 \ff (y) \frac{\partial}{\partial x} K(x,y) dy
= - \frac{n}{2x} (T\ff)(x) + n x^{n/2-1}\int_x^1 \frac{\ff(y)}{y^{n/2}} dy.
\end{equation}
\end{lemma}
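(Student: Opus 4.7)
The plan is to exploit the piecewise structure of $K(x,y)=(\min(x,y)/\max(x,y))^{n/2}$ by splitting the defining integral at $y=x$. For $x\in(0,1]$, write
\[
(T\ff)(x) \;=\; x^{-n/2}\,A(x)\,+\,x^{n/2}\,B(x), \qquad A(x):=\int_0^x y^{n/2}\ff(y)\,dy,\quad B(x):=\int_x^1 y^{-n/2}\ff(y)\,dy.
\]
Cauchy--Schwarz shows that $A$ is well-defined and absolutely continuous on $[0,1]$, while $B$ is well-defined and absolutely continuous on every $[\de,1]$ with $\de>0$, with $A'(x)=x^{n/2}\ff(x)$ and $B'(x)=-x^{-n/2}\ff(x)$ for a.e.\ $x\in(0,1]$.

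Next, apply the product rule. The two contributions $x^{-n/2}A'(x)=\ff(x)$ and $x^{n/2}B'(x)=-\ff(x)$ cancel exactly, leaving
\[
(T\ff)'(x) \;=\; -\tfrac{n}{2}x^{-n/2-1}A(x)+\tfrac{n}{2}x^{n/2-1}B(x)
\]
for a.e.\ $x\in(0,1]$. Substituting $x^{-n/2}A(x)=(T\ff)(x)-x^{n/2}B(x)$ converts this into the rightmost form asserted in \eqref{Tffprime}. Since $T\ff$ is continuous on $(0,1]$ by Lemma \ref{l:range} and $B$ is continuous on $(0,1]$ by dominated convergence, the right-hand side is continuous on $(0,1]$. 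A locally absolutely continuous function whose almost-everywhere derivative coincides with a continuous function is in fact classically $C^1$, so $T\ff\in C^1(0,1]$.

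To establish the middle identity $\int_0^1\ff(y)\,\partial_x K(x,y)\,dy$, one may either compute $\partial_x K$ piecewise for $y\ne x$---yielding $-\tfrac{n}{2}y^{n/2}x^{-n/2-1}$ for $y<x$ and $\tfrac{n}{2}x^{n/2-1}y^{-n/2}$ for $y>x$---and recognize the resulting integral as precisely the two summands above; or apply dominated convergence to the difference quotients, using the factorization \eqref{Kkappaom} together with \eqref{kappaesti} and the bound $\om\leq n$ from \eqref{omdef}, which yield $|K(x',y)-K(x,y)|/|x'-x|\leq n/\min(x,x')$ as an integrable majorant against $|\ff|\in L^1[0,1]$ on any compact subinterval of $(0,1]$. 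The one conceptual subtlety---indeed the main obstacle---is that $K(\cdot,y)$ fails to be differentiable on the diagonal $y=x$; but this diagonal has Lebesgue measure zero in the $y$-integration, so it affects neither the piecewise partial derivative nor the splitting argument, and the proof goes through unobstructed.
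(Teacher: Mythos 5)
Your proof is correct, but it takes a genuinely different route from the paper's. The paper works directly with the difference quotients of $T\ff$: it factors $K(x',y)-K(x,y)=\om\big(\kappa(x',y),\kappa(x,y)\big)\big(\kappa(x',y)-\kappa(x,y)\big)$, bounds the quotient of the increment by $n|\ff(y)|/x_0$ via \eqref{kappaesti} and $0\le\om\le n$, and passes to the limit under the integral sign by dominated convergence at each fixed $x_0>0$ --- essentially the second alternative you only sketch in your last paragraph. Your primary argument instead splits the kernel at the diagonal, writes $T\ff=x^{-n/2}A+x^{n/2}B$, and differentiates by the product rule for locally absolutely continuous functions, observing the exact cancellation of the two boundary contributions $\pm\ff(x)$; you then upgrade the a.e.\ identity to a genuine $C^1$ statement because the candidate derivative is continuous on $(0,1]$. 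This is more elementary --- it bypasses the $\om$, $\kappa$ machinery of \eqref{kappadef}--\eqref{kappaesti} entirely --- and it makes the smoothing mechanism transparent: the potentially rough terms $x^{-n/2}A'$ and $x^{n/2}B'$ are precisely what cancel. What it gives up is that pointwise (indeed one-sided) differentiability at every single $x_0\in(0,1]$ comes out of the paper's limit computation directly, whereas you obtain it only a posteriori from the fundamental theorem of calculus applied to the continuous a.e.\ derivative; both steps are valid, and both arguments yield the same formula \eqref{Tffprime}.
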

\begin{proof} The two expressions given for $(T\ff)'(x)$ in \eqref{Tffprime}
are easily seen to be equal, so the proof hinges upon showing that $(T\ff)'(x)$
equals any one of them.

\medskip\noindent For any $0\leq x < x'\leq 1$ using \eqref{Kkappaom} we can write
\begin{align}\label{Tfidifferential}
\frac{(T\ff)(x')-(T\ff)(x)}{x'-x} & =
\int_0^1 \ff(y) ~ \om\big(\kappa(x',y),\kappa(x,y)\big)
\frac{\kappa(x',y) - \kappa(x,y)}{x'-x} ~ dy.
\end{align}
We fix $x_0>0$ and take  either $x=x_0$ and $x'\to x_0+$, or
$x'=x_0$ and $x \to x_0-$. In any case, by \eqref{kappaesti}
and $0\le \om \le n$ we have the Lebesgue integrable majorant
$n |\ff(y)|/x_0$ of the integrand, thus limit and integral can
be interchanged.

For example in the case $x'\to x_0+$ taking into account
\eqref{Kkappaomega} we are led to
\begin{align}\label{Tfidifflimit}
\lim_{x'\to x_0} & \frac{(T\ff)(x')-(T\ff)(x_0)}{x'-x_0}  = \int_0^1 \ff(y) \om\big(\kappa(x_0,y),\kappa(x_0,y)\big) \frac{\partial}{\partial x} \kappa(x,y)\big|_{x=x_0} dy\notag \\
 & = \int_0^1 \ff(y) \om\big(\kappa(x_0,y),\kappa(x_0,y)\big) \sign (y-x_0) \frac{\kappa(x_0,y)}{x_0} dy\notag \\
& = \frac{n}{2x_0} \int_0^1 \ff(y) K(x_0,y) \sign (y-x_0) dy 
= \frac{n}{2x_0} \left( \int_{x_0}^1 \ff(y) K(x_0,y) dy - \int_0^{x_0} \ff(y) K(x_0,y) dy \right)\notag \\
& = \frac{n}{x_0} \left(\int_{x_0}^1 \ff(y) K(x_0,y) dy -\frac12 (T\ff)(x_0) \right),
\end{align}
where we have used  $\frac{\partial}{\partial x} \kappa(x,y) =
\frac{\partial}{\partial x} (x/y) = 1/y$ for $y>x$ and
$\frac{\partial}{\partial x} \kappa(x,y) =
\frac{\partial}{\partial x} (y/x) = -y/x^2$ for $x<y$. When
substituting the definition of $K$ in the above, we obtain all
the asserted formulas. Note that in case $y=x_0$, one sided
derivatives of $\kappa(x_0,y)$ still exist (and are equal to
the limits from the respective side) but the existence and
value of the limit at one exceptional point does not interfere
the value of the integral, therefore we have just put $0$ for
the value of $\frac{\partial}{\partial x}
\kappa(x,x_0)\big|_{x=x_0}$ here.

When $x\to x_0-=x'-$, the  calculation is entirely the same.

The integrals on the right hand side of \eqref{Tffprime} are of
course integrals of integrable functions, and as such, are
continuous in function of the limits of integration. Therefore,
continuity of $(T\ff)'$ on $(0,1]$ also follows.
\end{proof}

\begin{remark}\label{rem:rightderivative}
When $x_0=0$, only the
right hand side derivative can be considered and thus we take
$x_0=x=0$ and $x'\to 0+$. Also, $(T\ff)(0)=0$ and $K(0,y)=0$,
hence the consideration of the differential reduces to
\begin{equation}\label{Tprime0}
\lim_{x'\to 0+} \frac{(T\ff)(x')}{x'} =
\lim_{x'\to 0+} \frac{1}{x'}\int_0^1 \ff(y) K(x',y) dy =
\lim_{x'\to 0+} \int_0^1 \ff(y)  \frac{\kappa^{n/2}(x',y)}{x'} ~ dy,
\end{equation}
which, however, cannot be handled for general $\ff\in L^2[0,1]$
or not even for $\ff\in C_0(0,1]$, and can be well estimated
only if we use something more on $\ff$. See { Corollary
\ref{c:Tffiprimeat0} below.}
\end{remark}
\begin{proposition}\label{p:Tfftotvar}
The operator $T$ maps $L^2[0,1]$ to the space of absolutely
continuous functions with bounded total variation. Moreover,
for the total variation of $T\ff$ we have $V(T\ff,[0,1])
\leq 2\|\ff\|_1$.
\end{proposition}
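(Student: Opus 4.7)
The plan is to use Lemma \ref{l:C1range} to get hold of $(T\ff)'$ on $(0,1]$ and then estimate its $L^1$-norm by Fubini, exploiting the simple monotonicity structure of $K(\cdot,y)$. Recall that for $\ff \in L^2[0,1] \subset L^1[0,1]$ we already know $T\ff \in C_0(0,1] \cap C^1(0,1]$ by Lemma \ref{l:range} and Lemma \ref{l:C1range}, with $(T\ff)(0)=0$ and $(T\ff)'(x)=\int_0^1 \ff(y)\,\partial_x K(x,y)\,dy$.

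The key observation is that for each fixed $y \in (0,1]$, the function $x \mapsto K(x,y)$ is very simple: it equals $(x/y)^{n/2}$ on $[0,y]$, where it is increasing from $0$ to $1$, and equals $(y/x)^{n/2}$ on $[y,1]$, where it is decreasing from $1$ to $y^{n/2}$. Its total variation on $[0,1]$ is therefore exactly $1+(1-y^{n/2}) = 2 - y^{n/2} \leq 2$, and in particular
\begin{equation*}
\int_0^1 |\partial_x K(x,y)|\,dx \;=\; 2 - y^{n/2} \;\leq\; 2
\end{equation*}
for every $y\in[0,1]$ (trivially also at $y=0$).

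Then by Tonelli applied to the nonnegative integrand $|\ff(y)|\,|\partial_x K(x,y)|$ and the explicit formula for $(T\ff)'$,
\begin{equation*}
\int_0^1 |(T\ff)'(x)|\,dx \;\leq\; \int_0^1 |\ff(y)| \left(\int_0^1 |\partial_x K(x,y)|\,dx\right) dy \;\leq\; 2\|\ff\|_1.
\end{equation*}
This shows $(T\ff)' \in L^1[0,1]$ with $\|(T\ff)'\|_1 \leq 2\|\ff\|_1$.

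To upgrade continuity plus a.e.{} differentiability to absolute continuity it remains to check that $T\ff$ is the indefinite integral of $(T\ff)'$. For $0<a<b\leq 1$ we have $(T\ff)(b)-(T\ff)(a) = \int_a^b (T\ff)'(t)\,dt$ from $T\ff \in C^1(0,1]$; letting $a\to 0+$ the left-hand side tends to $(T\ff)(b)$ by Lemma \ref{l:range}, while the right-hand side tends to $\int_0^b (T\ff)'(t)\,dt$ by dominated convergence, the integrability of $(T\ff)'$ on $[0,1]$ having just been established. Hence $T\ff$ is absolutely continuous on $[0,1]$ and its total variation equals $\int_0^1 |(T\ff)'(x)|\,dx \leq 2\|\ff\|_1$, proving the proposition.

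There is no real obstacle: once one recognizes that $K(\cdot,y)$ is unimodal with total variation at most $2$, a Fubini computation settles the matter. The only point requiring mild care is the passage from $C^1(0,1]$ to absolute continuity across the singular endpoint $0$, which is handled by $(T\ff)(0)=0$ together with the integrability of $(T\ff)'$.
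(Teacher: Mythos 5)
Your proof is correct and follows essentially the same route as the paper: the explicit formula for $(T\ff)'$ from Lemma \ref{l:C1range}, Tonelli, and the observation that $\int_0^1 |\partial_x K(x,y)|\,dx = V(K(\cdot,y),[0,1]) = 2-y^{n/2}\leq 2$. Your extra paragraph verifying absolute continuity across the endpoint $0$ is a welcome bit of care that the paper leaves implicit.
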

\begin{proof}
We already know that $T\ff \in C^1(0,1]$, so the total
variation, whether finite or infinite, can be computed as
$V(T\ff,[0,1])=\int_0^1 |(T\ff)'|$. Now the first formula
from \eqref{Tffprime} furnishes
\begin{align*}
V(T\ff,[0,1])&=\int_0^1 |(T\ff)'(x)|dx = \int_0^1 \left|
\int_0^1 \ff(y) \frac{\partial}{\partial x} K(x,y) dy \right| dx
\\ & \leq \int_0^1 \int_0^1 |\ff(y) | \left|\frac{\partial}{\partial x}
K(x,y) \right|dydx = \int_0^1 |\ff(y) | \left( \int_0^1
\left|\frac{\partial}{\partial x} K(x,y) \right|dx \right) dy
\leq 2 \|\ff\|_1,
\end{align*}
as for all $0<y<1$ fixed we have $ \int_0^1
\left|\frac{\partial}{\partial x} K(x,y) \right|dx =
V(K(\cdot,y),[0,1])=2-y^{n/2}<2$.
\end{proof}

\begin{corollary}\label{c:Tffiprimeat0} If $\psi$ lies in the range
of $T$, 
then { $T\psi\in C^1[0,1]$, and $(T\psi)'(0)=0$. In particular
if $\ff$ is an eigenfunction of $T$, then $\ff\in C^1[0,1]$,
and $\ff'(0)=0$.}
\end{corollary}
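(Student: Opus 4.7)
The strategy is to use the explicit formula \eqref{Tffprime} for $(T\psi)'$ on $(0,1]$ and show that it has limit $0$ as $x\to 0+$. Since $(T\psi)(0)=0$ (Lemma \ref{l:range}), a standard one-sided limit theorem---if a continuous function on $[0,1]$ is $C^1$ on $(0,1]$ and $f'(x)$ admits a limit $L$ as $x\to 0+$, then $f$ is differentiable from the right at $0$ with $f'_+(0)=L$---will then upgrade $T\psi \in C^1(0,1]$ to $T\psi \in C^1[0,1]$ with $(T\psi)'(0)=0$.

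The first step is to obtain a decay estimate $|\psi(y)|\leq C y^{1/2}$ for $y \in (0,1]$. Writing $\psi = T\chi$ with $\chi\in L^2[0,1]$ and splitting according to the two regimes of $K$,
\begin{equation*}
\psi(y) \;=\; y^{-n/2} \int_0^y \chi(z) z^{n/2}\,dz \;+\; y^{n/2}\int_y^1 \chi(z) z^{-n/2}\,dz,
\end{equation*}
the Cauchy--Schwarz inequality bounds each piece by a constant times $\|\chi\|_2\, y^{1/2}$ (for $n\geq 2$), yielding the claimed estimate.

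Substituting $|\psi(y)|\leq C y^{1/2}$ into \eqref{Tffprime} handles both summands. The ``singular'' term $nx^{n/2-1}\int_x^1 \psi(y) y^{-n/2}\,dy$ is dominated by $Cn\, x^{n/2-1}\int_x^1 y^{(1-n)/2}\,dy$, which a direct computation shows to be $O(x^{1/2})$ (for $n\geq 4$; with a logarithmic factor when $n=3$). For the other summand $-\frac{n}{2x}(T\psi)(x)$, the same Cauchy--Schwarz splitting applied to $T\psi$ using $|\psi(y)|\leq C y^{1/2}$ gives $(T\psi)(x) = O(x^{3/2})$ (up to logs), whence $(T\psi)(x)/x \to 0$. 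Both summands of $(T\psi)'(x)$ therefore vanish as $x\to 0+$, completing the first statement. The eigenfunction case is then immediate: if $T\ff = \EV \ff$ with $\EV \neq 0$ (which is the case for $\EV_1 = \|T\| > 0$, cf. Remark \ref{uniqremark}), then $\ff = \EV^{-1} T\ff$ lies in the range of $T$, so $T\ff \in C^1[0,1]$ with $(T\ff)'(0)=0$, and dividing by $\EV$ yields $\ff \in C^1[0,1]$ with $\ff'(0)=0$.

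The main technical hurdle is extracting just enough decay of $\psi$ near $0$ to beat the singular factors $x^{n/2-1}$ and $1/x$ present in \eqref{Tffprime}. A bare $\chi\in L^2[0,1]$ has essentially no pointwise control near $0$; the crucial bootstrap is that a single application of $T$ already buys the Hölder-type estimate $\psi(y) = O(y^{1/2})$, and a second application then buys the $O(x^{3/2})$ bound on $T\psi$. The case $n=2$ (i.e.\ $k=3$) is the borderline where this one-step bootstrap barely succeeds, and careful tracking of constants is needed there; for the main regime $k\geq 4$ which is relevant for the asymptotic analysis, the argument works cleanly.
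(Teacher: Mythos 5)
Your argument is correct in the regime the paper actually needs ($k\ge 4$), but it is a genuinely different proof from the one in the paper. The paper never passes to the limit of $(T\psi)'(x)$: it evaluates the one-sided difference quotient $(T\psi)(x')/x'$ directly (cf.\ Remark \ref{rem:rightderivative}), splitting the integral at $x'$, handling the near part by the mean value theorem and the far part by integration by parts --- for which it needs $\psi'\in L^1[0,1]$, i.e.\ the total-variation bound of Proposition \ref{p:Tfftotvar} --- followed by dominated convergence. You replace all of that by the single observation that one application of $T$ already buys the pointwise decay $|\psi(y)|\le C\|\chi\|_2\,y^{1/2}$ via Cauchy--Schwarz, after which \eqref{Tffprime} can be estimated term by term. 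This makes Proposition \ref{p:Tfftotvar} unnecessary for the corollary, and it establishes $\lim_{x\to 0+}(T\psi)'(x)=0$, hence continuity of the derivative at $0$ --- which is what $T\psi\in C^1[0,1]$ literally requires and which the paper's difference-quotient computation yields only after the standard mean-value upgrade you quote. One correction, though: for $n=2$ (i.e.\ $k=3$) the bootstrap does not ``barely succeed'' --- it fails, and so does the assertion itself, since there $(T\psi)'(x)=-x^{-2}\int_0^x y\,\psi(y)\,dy+\int_x^1\psi(y)y^{-1}\,dy\to\int_0^1\psi(y)y^{-1}\,dy$, which is strictly positive whenever $\psi>0$ on $(0,1]$ (e.g.\ $\psi=T\mathbf{1}$); note that the paper's own integration by parts also divides by $n/2-1$ and thus tacitly assumes $n\ge3$. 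Since only large $k$ matters for the application, this is harmless, but you should state the restriction $n\ge 3$ rather than suggest the borderline case goes through.
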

\begin{proof}
We have to calculate the limit in \eqref{Tprime0} for $\psi:=T\ff$ in
place of $\ff$. Recall from the above that then $\psi=T\ff\in C_0(0,1]$, in particular $\psi(0)=(T\ff)(0)=0$, and $\psi\in C^1(0,1]$, $V(\psi,[0,1])\leq 2\|\ff\|_1$. The second mean value theorem
and integration by parts yield with some appropriate
$z:=z_{x'}\in (0,x')$
\begin{align*}
\frac{1}{x'} \int_0^1 & \psi(y) K(x',y) dy = \frac{1}{x'}
\left\{ \int_0^{x'} \psi(y) K(x',y) dy + \int_{x'}^1 \psi(y)
\frac{x'^{n/2}}{y^{n/2}}dy \right\}
\\&= \psi(z_{x'}) K(x',z_{x'}) +  \left\{ \left[\psi(y)
\frac{-x'^{n/2-1} }{n/2-1} y^{1-n/2}\right]_{x'}^1 + \int_{x'}^1  \psi'(y)
\frac{x'^{n/2-1} }{n/2-1} y^{1-n/2} dy \right\},
\end{align*}
so the first term tends to $\psi(0)=0$ when $x'\to 0+$. The term in
the square bracket contributes $x'^{n/2-1}\frac{-\psi(1)}{n/2-1} +
\frac{\psi(x')}{n/2-1} $, and as $x'\to 0+$ and $\psi(x')\to
\psi(0)=0$, both terms converge to 0. Finally, for the integral
Proposition \ref{p:Tfftotvar} gives that $\psi' \in L^1[0,1]$,
while the product of the further factors stays bounded uniformly for
all $x',y\in [0,1]$, as the integral runs only through values $y\ge
x'$. That is, we can again use the Lebesgue Dominated Convergence
Theorem and calculate the limit by moving it under the integral
sign. Furthermore, the pointwise limit of the expression is zero for
all fixed $y$, whence the assertion follows.
\end{proof}


\section{Solving the maximization problem}\label{sec:DiffEq}

\subsection{Setting up a differential equation for potential
extremal functions}\label{sec:setupdiffeq} By the previous
section we know that our maximization problem has a solution,
and we also saw that maximizers are sufficiently smooth. We can
now set up a differential equation to find maximizers, or which
is essentially equivalent, to find the eigenfunctions of $T$.

\begin{proposition}\label{p:fiq}
Let $\ff \in L^2[0,1]$ be an eigenfunction of $~T$
corresponding to the eigenvalue $\EV>0$. Then $\ff$ is
continuous on $[0,1]$, infinitely often differentiable on
$(0,1]$. The function $q(x)=-\int_x^1\ff(y)y^{-n/2}dy$
satisfies $q(1)=0$ and the differential  equation
\begin{equation}\label{qdiffeq}
q''(x) + \frac{n}{x} q'(x) + \frac{b}{x} q(x)=0\quad
\Big( x\in(0,1] \Big), \qquad \text{\rm where} \quad b:=\frac{n}{\EV}>0.
\end{equation}
Conversely, let $\EV>0$ and suppose that $q$ is a nonzero,
$C^2(0,1]$ solution of the differential equation above with
$q(1)=0$. If $\ff(x)=x^{n/2}q'(x)$ extends continuously to $0$
with $\lim_{x\to 0+} x^{n/2} q'(x)=0$, then
$\ff$ is an eigenfunction of $T$ corresponding to the eigenvalue
$\EV>0$.
\end{proposition}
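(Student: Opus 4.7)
The plan is to use the explicit formula \eqref{Tffprime} for $(T\ff)'$ from Lemma \ref{l:C1range} together with the eigenvalue equation $T\ff = \EV\ff$ to translate the integral equation into the second-order ODE \eqref{qdiffeq} for $q$. For the forward direction, Corollary \ref{p:contmax} provides continuity of $\ff$ on $[0,1]$ with $\ff(0) = 0$, and Corollary \ref{c:Tffiprimeat0} upgrades this to $\ff \in C^1[0,1]$. Because $q'(y) = \ff(y)\, y^{-n/2}$ and $q(1) = 0$, the integral on the right of \eqref{Tffprime} simplifies to $\int_x^1 q'(y)\,dy = -q(x)$, so \eqref{Tffprime} combined with $T\ff = \EV\ff$ gives
\begin{equation*}
\EV\, \ff'(x) = -\frac{n}{2x}\, \EV\, \ff(x) - n\, x^{n/2-1}\, q(x), \qquad x \in (0,1].
\end{equation*}
Substituting $\ff = x^{n/2} q'$ and $\ff' = \tfrac{n}{2}\, x^{n/2-1} q' + x^{n/2} q''$, and dividing by $\EV\, x^{n/2}$, produces exactly \eqref{qdiffeq} with $b = n/\EV$.

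The claimed $C^\infty(0,1]$-regularity then follows by bootstrapping on \eqref{qdiffeq}: once $q \in C^2(0,1]$ is known from $\ff \in C^1$, the identity $q'' = -(n/x)\, q' - (b/x)\, q$ shows inductively that $q \in C^k(0,1]$ for every $k \geq 2$ (since $1/x$ is smooth on $(0,1]$), and hence $\ff = x^{n/2} q' \in C^\infty(0,1]$.

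For the converse, set $g := T\ff$ and $h := \EV\, \ff = \EV\, x^{n/2} q'$. Reversing the computation above, I would show that both $g$ and $h$ satisfy the \emph{same} first-order linear ODE on $(0,1]$, namely
\begin{equation*}
u'(x) + \frac{n}{2x}\, u(x) = -n\, x^{n/2-1}\, q(x);
\end{equation*}
for $g$ this is precisely \eqref{Tffprime} rewritten via $\int_x^1 q'(y)\,dy = -q(x)$, and for $h$ it follows by differentiating $h$ and using \eqref{qdiffeq} to eliminate $q''$. Multiplying by the integrating factor $x^{n/2}$ yields $\bigl(x^{n/2}(g - h)\bigr)' = 0$, so $x^{n/2}\bigl(g(x) - h(x)\bigr)$ is constant on $(0,1]$. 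This constant must vanish: by Lemma \ref{l:range} we have $g = T\ff \in C_0(0,1]$, so $x^{n/2}\, g(x) \to 0$ as $x \to 0+$, while $x^{n/2}\, h(x) = \EV\, x^{n/2}\, \ff(x) \to 0$ precisely by the boundary hypothesis $\lim_{x \to 0+} x^{n/2} q'(x) = 0$. Therefore $g \equiv h$ on $(0,1]$, which is the eigenvalue equation $T\ff = \EV\, \ff$.

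The only delicate point is the boundary matching at $x = 0$ in the converse: the ODE \eqref{qdiffeq} has a two-dimensional solution space on $(0,1]$, and the vanishing condition $\lim_{x \to 0+} x^{n/2} q'(x) = 0$ is exactly what singles out the solution corresponding to an eigenfunction of $T$. Everything else is essentially routine algebraic substitution into the explicit formula \eqref{Tffprime} established in Section \ref{sec:diffmax}.
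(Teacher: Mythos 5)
Your proof is correct and follows essentially the same route as the paper: the forward direction substitutes $T\ff=\EV\ff$ into \eqref{Tffprime} and converts it into \eqref{qdiffeq} via $\ff=x^{n/2}q'$, and the converse shows that $x^{n/2}\bigl(T\ff-\EV\ff\bigr)$ has vanishing derivative and vanishes at $0$ (the paper writes this same integrating-factor computation as $\frac{d}{dx}\bigl(x^{n/2}\EV\ff-x^{n/2}\psi\bigr)=0$ with $\psi=T\ff$). Your explicit bootstrap for $C^\infty(0,1]$ and the remark that the boundary condition at $0$ is what selects the eigenfunction branch are welcome clarifications, but not substantive departures.
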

\begin{proof}
If $\ff\in L^2[0,1]$ is an eigenfunction of $T$ for the eigenvalue $\EV>0$, then it belongs to the range of $T$, hence
is continuous and continuously differentiable on $(0,1]$ by Lemma \ref{l:C1range}.
Substituting $T\ff=\EV\ff$ in \eqref{Tffprime} we obtain
\begin{equation*}
\EV \ff'(x)=- \frac{n}{2x} \EV \ff(x) + nx^{n/2-1}\int_x^1 \frac{\ff(y)}{y^{n/2}} dy.
\end{equation*}
As the right-hand side is differentiable, we can differentiate
also the left-hand side showing $\ff \in C^2(0,1]$. We substitute $x^{-n/2}\ff(x)=q'(x)$ and $\ff(x)=x^{n/2}q'(x)$ and obtain
\begin{equation*}
\frac{d}{dx}\left(\EV x^{n/2}q'(x)\right)=- \frac{n}{2x}\EV x^{n/2}q'(x)+nx^{n/2-1}\int_x^1q'(y) dy,
\end{equation*}
and hence
\begin{equation*}
\frac{d}{dx}\left(\EV x^{n/2}q'(x)\right)=- {\EV }\frac{n}{2}x^{n/2-1}q'(x)-nx^{n/2-1}q(x).
\end{equation*}
Differentiation yields
\begin{equation*}
\EV x^{n/2} q''(x) + \EV \frac{n}{2} x^{n/2-1} q'(x)= - \frac{\EV n}{2x}x^{n/2}q'(x)-nx^{n/2-1}q(x),
\end{equation*}
and then by rearranging we obtain
\begin{equation*}
\EV x^{n/2} q''(x) + \EV \frac{n}{x} x^{n/2} q'(x) + n x^{n/2-1} q(x)=0.
\end{equation*}
Division by $\EV x^{n/2}$ thus leads to the asserted differential equation \eqref{qdiffeq}.

\medskip\noindent To see the converse we set $\psi=T\ff$. Note
that then $\psi\in C_0(0,1]$ according to Lemma \ref{l:range}.
Then { $\ff(x)=x^{n/2}q'(x)$ entails}
\begin{equation*}
q''(x)=-\frac n2 x^{-n/2-1}\ff(x)+x^{-n/2}\ff'(x),
\end{equation*}
so that { using the assumption that $q$ solves \eqref{qdiffeq}
we obtain}
\begin{equation*}
-\frac n2 x^{-n/2-1}\ff(x)+x^{-n/2}\ff'(x)+\frac{n}{x}
x^{-n/2}\ff(x) + \frac{n/\EV}{x} \left(- \int_x^1y^{-n/2}\ff(y)dy\right)=0,
\end{equation*}
and thus also
\begin{equation*}
\frac n2\EV  x^{-n/2-1}\ff(x)+\EV x^{-n/2}\ff'(x)-\frac{n}{x}
\int_x^1y^{-n/2}\ff(y)dy=0.
\end{equation*}
By \eqref{Tffprime} { with $\psi=T\ff$ we also have}
\begin{equation*}
x^{-n/2}\psi'(x) = - \frac{n}{2}x^{-n/2-1} \psi(x) + \frac nx
\int_x^1 \frac{\ff(y)}{y^{n/2}} dy,
\end{equation*}
so
\begin{equation*}
\frac n2\EV  x^{-n/2-1}\ff(x)+\EV x^{-n/2}\ff'(x)-x^{-n/2}\psi'(x)-\frac{n}{2}x^{-n/2-1} \psi(x)=0.
\end{equation*}
If we multiply by $x^n$, we obtain for all $x>0$
\begin{equation*}
0=\frac n2\EV  x^{n/2-1}\ff(x)+\EV
x^{n/2}\ff'(x)-x^{n/2}\psi'(x)-\frac{n}{2}x^{n/2-1}
\psi(x)=\frac{d}{dx}\left(x^{n/2}\EV
\ff(x)-x^{n/2}\psi(x)\right).
\end{equation*}
Since $\ff,\psi\in C[0,1]$, $x^{n/2}(\EV\ff(x)-\psi(x))$
must vanish at $0$, whence $\EV \ff=\psi$ follows.
\end{proof}

Thus the solution of the maximization problem is reduced to
solving the homogeneous second order ordinary differential
equation \eqref{qdiffeq}, and to finding the feasible values of
$\EV$.  Next we solve this equation and analyze some
properties of the solutions.

\subsection{Bessel functions and Bessel's differential
equation}
Recall Bessel's differential equation
\begin{equation}\label{BesselEq}
y''(x)+ \frac{1}{x} y'(x) + \left(1- \frac{\nu^2}{x^2}\right) y(x) =0,
\end{equation}
for some fixed parameter $\nu\in \RR$. The Bessel function
$J_\nu$ (of the first kind) is a solution of this equation,
\cite[(6.71), p. 115]{B}. Notice that $J_{-\nu}$ is also a
solution, but for $\nu$ integer $J_\nu$ and $J_{-\nu}$ are
linearly dependent, in fact $J_{-\nu}=(-1)^\nu J_{\nu}$. So for
$\nu\in \NN$ another, linearly independent solution is needed,
which is provided by the Bessel functions $Y_\nu$ (of the
second kind), obtained as
\begin{equation*}
Y_\nu(x):=J_\nu(x)\int \frac{dx}{xJ_\nu^2(x)},
\end{equation*}
where specifying the limits of integration  is equivalent to
fix some primitive of the integrand, and the integration limit
cannot be at $0$  where $Y_\nu(x)$ is divergent in the order
$x^{-\nu}$, see \cite[(6.73), (6.74)]{B}. Then for $\nu\in \NN$
the general solution of equation \eqref{BesselEq} is a linear
combination $c_1 J_\nu + c_2 Y_\nu$, see \cite[\S 102]{B}.

\subsection{Solution of the differential equation}\label{sec:WM}
Bowman computes, see \cite[(6.80), p. 117]{B}, what happens if we consider the transformed,
substituted functions $u(x):=x^\alpha y (\beta x^\gamma)$,
where $y$ satisfies the Bessel equation \eqref{BesselEq}, and
establishes that then the new functions $u(x)$ will be the
general solutions of the transformed equation
\begin{equation}\label{Bowman}
u''(x) - \frac{2\alpha -1}{x} u'(x) + \left(\beta^2\gamma^2
x^{2\gamma-2} + \frac{\alpha^2 - \nu^2\gamma^2}{x^2} \right) u(x)
=0.
\end{equation}
If we choose here the parameters $\nu:=m$,
$\alpha:=-m/2=1-k/2$, $\beta:=2 \sqrt{n/\EV}$ and $\gamma:=1/2$
(where $n:=k-1=m+1$ and $m:=k-2$ as fixed above in
\eqref{mndef}), then the equation \eqref{Bowman} becomes
exactly \eqref{qdiffeq}. Thus we obtain that for any fixed
values of $m:=k-2$ and $\EV> 0$, there is a one-to-one
correspondence between the solutions $q$ of \eqref{qdiffeq} and
$y$ of \eqref{BesselEq} given by $q(x):=x^\alpha y (\beta
x^\gamma)=x^{1-k/2} y (2 \sqrt{n/\EV} \sqrt{x})$.
\begin{corollary} Every solution $q$ of \eqref{qdiffeq} is
a linear combination of transformed Bessel functions from the
above, i.e.,
\begin{equation*}
q(x)=c_1x^{-m/2}J_m(2\sqrt{n/\EV}
\sqrt{x})+c_2x^{-m/2}Y_m(2\sqrt{n/\EV} \sqrt{x}),\quad
(m=n-1=k-2).
\end{equation*}
\end{corollary}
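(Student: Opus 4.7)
The plan is to combine the one-to-one correspondence already established via Bowman's transformation with the known form of the general solution of Bessel's equation \eqref{BesselEq}.

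First I would record that \eqref{qdiffeq} is a second-order homogeneous linear ODE with continuous coefficients on $(0,1]$, hence its solution space is exactly two-dimensional. The same is true of Bessel's equation \eqref{BesselEq}; for the integer parameter $\nu=m\in\NN$, the standard theory (recalled just above the corollary) tells us that $J_m$ and $Y_m$ form a fundamental system, so every solution of \eqref{BesselEq} is of the form $c_1 J_m + c_2 Y_m$.

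Next I would invoke the Bowman substitution that has just been set up: with the specific choice $\nu=m$, $\alpha=-m/2$, $\beta=2\sqrt{n/\EV}$, $\gamma=1/2$, equation \eqref{Bowman} reduces to \eqref{qdiffeq} verbatim. Hence the map
\begin{equation*}
\Phi : y\longmapsto q,\qquad q(x):=x^{-m/2}\,y\bigl(2\sqrt{n/\EV}\,\sqrt{x}\bigr),
\end{equation*}
sends solutions of \eqref{BesselEq} to solutions of \eqref{qdiffeq}. It is clearly linear. It is also injective on the solution space, since for $x\in(0,1]$ the change of variable $t=2\sqrt{n/\EV}\sqrt{x}$ is a smooth bijection onto its image and the weight $x^{-m/2}$ is nonvanishing; explicitly one can recover $y$ from $q$ by $y(t)=\bigl(t/(2\sqrt{n/\EV})\bigr)^{m}\,q\bigl(t^{2}\EV/(4n)\bigr)$ on the image interval. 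A linear injection between two vector spaces of the same finite dimension is a bijection, so $\Phi$ maps the two-dimensional solution space of \eqref{BesselEq} onto the two-dimensional solution space of \eqref{qdiffeq}.

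Finally, applying $\Phi$ to the fundamental system $\{J_m,Y_m\}$ yields the two functions $x^{-m/2}J_m(2\sqrt{n/\EV}\sqrt{x})$ and $x^{-m/2}Y_m(2\sqrt{n/\EV}\sqrt{x})$, which then form a basis of the solution space of \eqref{qdiffeq}. Every solution $q$ of \eqref{qdiffeq} is therefore a linear combination of these two functions, which is the asserted formula. No serious obstacle appears: all nontrivial work was already carried out in the Bowman-type calculation preceding the corollary, and the only genuine input beyond that is the classical fact that for integer order the pair $(J_m,Y_m)$ spans the Bessel solution space.
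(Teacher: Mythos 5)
Your argument is correct and follows the same route as the paper: the paper derives the corollary directly from Bowman's transformation establishing the one-to-one correspondence between solutions of \eqref{qdiffeq} and \eqref{BesselEq}, combined with the fact that $\{J_m, Y_m\}$ is a fundamental system for integer order. You merely make explicit the standard ODE bookkeeping (two-dimensionality of the solution spaces and bijectivity of the linear substitution) that the paper leaves implicit.
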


\subsection{Some analysis of the occurring Bessel type functions}\label{sec:Bessel}
Before proceeding, let us note one important thing. Not all
solutions of the differential equation are relevant for us,
because the resulting $q'$ must have finite  weighted  square
integral, i.e.,  $\ff(x)=x^{n/2}q'(x)$ has to belong to
$L^2[0,1]$ (and even be continuous on $[0,1]$, according to
Proposition \ref{p:fiq}).

According to the second formula of \cite[9.1.30]{AS} (with the choice $k=1$
and $\nu=m$ there) we have
\begin{align*}
&\left(\frac{1}{x} \frac{d}{dx} \right) \left( x^{-m}
c_1J_m (x)+c_2Y_m(x)\right)\\
&\qquad=-x^{-m-1}\left(c_1J_{m+1} (x)+c_2Y_{m+1}(x)\right)=-x^{-n}\left(c_1J_n (x)+c_2Y_n(x)\right),
\end{align*}
therefore we obtain for a solution $q$  of \eqref{qdiffeq} that
\begin{align*}
q'(x) &=\frac{d}{dx}\left(c_1x^{-m/2}J_m(2\sqrt{n/\EV}
\sqrt{x})+c_2x^{-m/2}Y_m(2\sqrt{n/\EV} \sqrt{x})\right)\\
&=-\frac{1}{2\sqrt{n/\EV}} x^{1/2}x^{-n/2}\left(c_1J_n(2\sqrt{n/\EV}
\sqrt{x})+c_2Y_n(2\sqrt{n/\EV} \sqrt{x})\right).
\end{align*}
So that
\begin{equation}\label{ffiJnYn}
\ff(x)=x^{n/2} q'(x) =-
\frac{\sqrt{x}}{2\sqrt{n/\EV}}
\left(c_1J_n(2\sqrt{n/\EV}
\sqrt{x})+c_2Y_n(2\sqrt{n/\EV} \sqrt{x})\right).
\end{equation}
To see when such a $\ff$ may actually be an eigenfunction of
$T$, we first find out when it belongs to $C_0(0,1]$. First,
see \cite[(1.2)]{B}
\begin{equation}\label{Besselseries}
J_\nu(x)=x^\nu \sum_{j=0}^{\infty} \frac{(-1)^j}{2^{\nu+2j}}
\frac{x^{2j}}{j!(\nu+j)!}, \quad \textrm{whence also}\quad
J_\nu(x)\sim \frac{1}{2^\nu \nu!} x^\nu~~(x\to 0+),
\end{equation}
so in particular $J_n$ is continuous on $[0,\infty)$, and for
any $c_1\in \RR$ the part $\frac{\sqrt{x}}{2\sqrt{n/\EV}}
c_1J_n(2\sqrt{n/\EV} \sqrt{x})$ belongs to $C_0(0,1]\subset
L^2[0,1]$. Second, $Y_n(x) \asymp x^{-n}$ ($x\to 0+$) \cite[p.
116]{B}, entailing that for $c_2\ne 0$
$\frac{\sqrt{x}}{2\sqrt{n/\EV}} c_2 Y_n(a\sqrt{x})\asymp
x^{-(n-1)/2}$, and this function is not even bounded near $0$,
if $n>1$ (i.e., when $k=n-1>2$). If $n=1$, i.e., $k=2$, this function is not vanishing at $0$, a condition that is necessary for an eigenfunction of $T$ by Lemma \ref{l:range}. So from Corollary \ref{p:contmax} we obtain that
$\ff$ in \eqref{ffiJnYn} belongs to $C_0(0,1]$ (and thus may
be a candidate for being an eigenfunction of $T$) if and only
if $c_2=0$.

\medskip\noindent
As a consequence of this and of Proposition \ref{p:fiq} we obtain the following.
\begin{corollary}\label{c:eigenvalue}
Consider the operator $T$ from \eqref{Tdef}, and let
$\EV>0$. Then $\EV>0$ is an eigenvalue of $T$ if and
only if
\begin{equation*}
J_m\left(2\sqrt{n/\EV}\right)=0.
\end{equation*}
In this case
\begin{equation*}
\ff(x)=x^{n/2} q'(x) =c_1 x^{1/2} J_n(2\sqrt{n/\EV} \sqrt{x}), \quad c_1\neq 0
\end{equation*}
are the only eigenfunctions corresponding to $\EV$.
\end{corollary}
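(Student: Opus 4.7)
The plan is to combine Proposition \ref{p:fiq} with the explicit formula for $q$ given by the corollary in \S\ref{sec:WM}, and then to impose the two remaining side conditions: continuity at $0$ (equivalently, $\ff \in C_0(0,1]$) and $q(1)=0$. The first will eliminate the $Y_m$-component, the second will force the quantization condition $J_m(2\sqrt{n/\EV})=0$.

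For the ``only if'' direction, I would assume that $\EV>0$ is an eigenvalue with eigenfunction $\ff$. By Proposition \ref{p:fiq}, the associated $q(x)=-\int_x^1\ff(y)y^{-n/2}dy$ is a nonzero $C^2(0,1]$ solution of \eqref{qdiffeq} with $q(1)=0$ and $\ff(x)=x^{n/2}q'(x)\in C_0(0,1]$. The corollary at the end of \S\ref{sec:WM} gives
\begin{equation*}
q(x)=c_1 x^{-m/2}J_m(\alpha\sqrt{x})+c_2 x^{-m/2}Y_m(\alpha\sqrt{x}),\qquad \alpha:=2\sqrt{n/\EV},
\end{equation*}
and then formula \eqref{ffiJnYn} expresses $\ff$ in terms of $J_n$ and $Y_n$. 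The discussion following \eqref{ffiJnYn} (using $J_\nu(x)\sim x^\nu/(2^\nu\nu!)$ and $Y_n(x)\asymp x^{-n}$ near $0$) shows that the $Y_n$-piece behaves like $x^{-(n-1)/2}$ near $0$ and therefore does not belong to $C_0(0,1]$ unless $c_2=0$. Thus $c_2=0$, and evaluating $q(1)=c_1 J_m(\alpha)=0$ with $c_1\ne 0$ (otherwise $\ff\equiv 0$) yields the necessary condition $J_m(2\sqrt{n/\EV})=0$.

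For the ``if'' direction, assume $J_m(2\sqrt{n/\EV})=0$ and set $q(x):=c_1 x^{-m/2}J_m(\alpha\sqrt{x})$ with $c_1\ne 0$. By construction $q$ solves \eqref{qdiffeq}, $q(1)=c_1 J_m(\alpha)=0$, and by the same differentiation computation leading to \eqref{ffiJnYn} we get $\ff(x)=x^{n/2}q'(x)$ proportional to $x^{1/2}J_n(\alpha\sqrt{x})$. Since $J_n$ is continuous on $[0,\infty)$ and $x^{1/2}\to 0$ as $x\to 0+$, we have $\ff\in C_0(0,1]$, so in particular $\lim_{x\to 0+}x^{n/2}q'(x)=0$. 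The converse part of Proposition \ref{p:fiq} now applies and gives that $\ff$ is an eigenfunction of $T$ for the eigenvalue $\EV$. The uniqueness statement follows at once: the necessary direction already forced $c_2=0$ and $c_1$ is the only free parameter, which is precisely the one-parameter family $\ff(x)=c_1 x^{1/2}J_n(\alpha\sqrt{x})$ with $c_1\ne 0$.

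The main obstacle is essentially absent, as the heavy lifting has been done in Proposition \ref{p:fiq} (reducing to an ODE with precise boundary behaviour) and in \S\ref{sec:WM}--\S\ref{sec:Bessel} (solving the ODE in closed form and identifying the singular component). The only genuinely delicate point is to be careful with the boundary condition $\lim_{x\to 0+}\ff(x)=0$: one has to invoke the $Y_n$-asymptotics correctly, treating the borderline case $n=1$ (i.e.\ $k=2$) by the observation that $Y_1(\alpha\sqrt{x})\sqrt{x}$ tends to a nonzero limit rather than blowing up, so the requirement $\ff(0)=0$ still excludes $c_2\ne 0$.
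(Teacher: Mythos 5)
Your proof is correct and follows exactly the route the paper intends: the paper states the corollary as an immediate consequence of Proposition \ref{p:fiq} together with the preceding analysis in \S\ref{sec:WM}--\S\ref{sec:Bessel} (general solution of \eqref{qdiffeq}, exclusion of the $Y_n$-component via its behaviour at $0$ including the case $n=1$, and quantization from $q(1)=0$), which is precisely what you spell out. No discrepancies.
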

For given $m$ let us denote the roots of $J_m$ by
$\alpha_{m,r}$ ($r\in\NN$) ordered increasingly. At this stage it is in
order to recall the following about the zeros of Bessel
functions. We have $\alpha_{m,r}\to \infty$ ($r\to\infty$), and
for rather large values the roots $\alpha_{m,r}$ of $J_m$ are
very well distributed, as essentially there falls one root in
each interval of length $\pi$. However, for fixed $m$ the
increasing sequence of zeros $(\alpha_{m,r})$ starts only with
$\alpha_{m,1} \sim m + c m^{1/3}$, with $c=1.8557571\dots$, see
\cite[9.5.14, p. 341]{AS}. Let us introduce the notation
\begin{align*}
 \EV_{m,r}&:=\EV_r:=4(m+1)/\alpha_{m,r}^2
 \intertext{and}
q_r(x)&:=q_{m,r}(x):=x^{-m/2} J_{m}\Bigl(2\sqrt{(m+1)/\EV_{m,r}}\cdot  \sqrt{x}\Bigr)=x^{-m/2} J_{m}\Bigl(\alpha_{m,r}\sqrt{x}\Bigr).
\end{align*}
By putting everything together we obtain the following result.
\begin{theorem}\label{t:summary}
For the extremal problem \eqref{Sproblem}
we have
\begin{equation}\label{eq:mainSk}
S(k)=\EV_1=\frac{4 (k-1)}{\alpha_{k-2,1}^2}=
\frac{4}{k + 2c k^{1/3} + O(1)}
\end{equation}
with $\alpha_{k-2,1}$ the first root of the order $k-2$ Bessel
function $J_{k-2}(x)$, and the constant $c=1.8557571\dots$.

The only extremal functions for the formulation
\eqref{qproblem} are nonzero constant multiples of
\begin{align*}
q_1(x)&=x^{-(k-2)/2}J_{k-2}(\alpha_{k-2,1}\sqrt{x}).
\end{align*}
\end{theorem}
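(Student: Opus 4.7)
The plan is to assemble the results of the previous sections essentially without new work, except for the asymptotic expansion of the smallest zero of $J_{k-2}$.

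First I would note that by Proposition~\ref{p:maximum} together with Remark~\ref{uniqremark}, the supremum $S(k)$ is attained and equals the largest eigenvalue $\EV_1$ of the operator $T$ from \eqref{Tdef}, and the only maximizers are (nonzero) eigenfunctions of $T$ for $\EV_1$. By Corollary~\ref{c:eigenvalue}, a value $\EV>0$ is an eigenvalue of $T$ precisely when $J_m\bigl(2\sqrt{n/\EV}\bigr)=0$, i.e., when $2\sqrt{n/\EV}=\alpha_{m,r}$ for some $r\in\NN$, which gives $\EV=\EV_{m,r}=4n/\alpha_{m,r}^2=4(k-1)/\alpha_{k-2,r}^2$. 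Since $r\mapsto\alpha_{m,r}$ is strictly increasing, the largest $\EV_{m,r}$ is $\EV_{m,1}$, thus
\begin{equation*}
S(k)=\EV_1=\frac{4(k-1)}{\alpha_{k-2,1}^2}.
\end{equation*}

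Next I would turn to the asymptotic formula. By the classical expansion $\alpha_{m,1}=m+c\,m^{1/3}+O(1)$ (with $c=1.8557571\ldots$, see \cite[9.5.14]{AS}), setting $m=k-2$ and squaring yields
\begin{equation*}
\alpha_{k-2,1}^2=(k-2)^2+2c(k-2)^{4/3}+O(k).
\end{equation*}
Dividing by $k-1$ and using $(k-2)^2/(k-1)=k-3+O(1/k)$ together with $(k-2)^{4/3}/(k-1)=(k-2)^{1/3}+O(k^{-2/3})=k^{1/3}+O(k^{-2/3})$, one obtains
\begin{equation*}
\frac{\alpha_{k-2,1}^2}{k-1}=k+2ck^{1/3}+O(1),
\end{equation*}
which, taking reciprocals and multiplying by $4$, yields the claimed expression $S(k)=4/(k+2ck^{1/3}+O(1))$.

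For the uniqueness statement, Corollary~\ref{c:eigenvalue} identifies the entire eigenspace corresponding to $\EV_1$ as the one-dimensional space spanned by $\ff_1(x)=x^{1/2}J_n(\alpha_{k-2,1}\sqrt{x})$. Translated back to the original formulation \eqref{qproblem} via $\ff(x)=x^{n/2}q'(x)$, the corresponding $q$ from \S\ref{sec:WM} is (up to a nonzero scalar) exactly $q_1(x)=x^{-(k-2)/2}J_{k-2}(\alpha_{k-2,1}\sqrt{x})$; since maximizers in \eqref{qMaxProblem} correspond bijectively (modulo the boundary normalization) to the eigenfunctions of $T$ at $\EV_1$ under the substitutions of \S\ref{sec:reformulations}, this exhausts the extremal functions.

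The main obstacle is really only bookkeeping: matching Corollary~\ref{c:eigenvalue}, where the eigenvalue condition involves both $m=k-2$ and $n=k-1$, with the root-of-Bessel-function parametrization and tracking the asymptotic expansion carefully enough that the denominator reads $k+2ck^{1/3}+O(1)$ rather than $(k-2)+2c(k-2)^{1/3}+\cdots$. All other ingredients have already been established.
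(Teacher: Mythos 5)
Your assembly of the earlier results is exactly the route the paper takes: Proposition~\ref{p:maximum} and Remark~\ref{uniqremark} give attainment and reduce uniqueness to identifying the eigenspace of $\EV_1$, Corollary~\ref{c:eigenvalue} parametrizes the eigenvalues by the zeros of $J_{k-2}$ so that the largest one is $4(k-1)/\alpha_{k-2,1}^2$, and the asymptotics follow from $\alpha_{m,1}=m+cm^{1/3}+O(1)$; your bookkeeping for $\alpha_{k-2,1}^2/(k-1)=k+2ck^{1/3}+O(1)$ is correct.

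There is, however, one step you skip which is in fact the only thing the paper's own proof singles out as still needing verification. All of the existence and spectral machinery was developed for the \emph{relaxed} problem on $\X$, where the linear side condition $P(1)\ne 0$ (equivalently, $\int_0^1 q(y)\,y^{k-2}\,dy\ne 0$) has been dropped; Section~\ref{sec:reformulations} explicitly defers to the end the check that the maximizer does not land in the excluded hyperplane $\mathcal H$. Without that check you only get $S(k)\le\EV_1$ for the original problem \eqref{Sproblem} with its normalization $P(1)=1$. The gap is closed in one line: since $\alpha_{k-2,1}$ is the \emph{first} positive zero of $J_{k-2}$, the function $q_1(x)=x^{-(k-2)/2}J_{k-2}(\alpha_{k-2,1}\sqrt{x})$ is strictly positive on $(0,1)$, hence the functional $\int_0^1 q_1(y)\,y^{k-2}\,dy$ is positive, $q_1\notin\mathcal H$, and the supremum over $\X$ coincides with that over $\X\setminus\mathcal H$, i.e.\ with $S(k)$. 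You should add this observation; everything else in your argument stands as written.
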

\begin{proof}
Uniqueness follows from Remark \ref{uniqremark} and Corollary
\ref{c:eigenvalue}. For $q_1$ being a maximizer for
\eqref{qproblem} it remains only to show that
\begin{equation*}
\int_0^1 q_1(y)y^n dy\neq 0.
\end{equation*}
But this follows since $q_1$ is (strictly) positive all over
$(0,1)$, $\alpha_{k-2,1}$ being the very first zero of the
Bessel function $J_{k-2}$.
\end{proof}


\section{Power series}\label{sec:ps}

The above settles the issue of the best weight $P$---and also
the order of $S(k)$---in the GPY method. However, not all
weight functions are easy to handle, and a Bessel
function---even if an analytic function with relatively
strongly convergent series expansion---may be unmanageable, at
least in our current technical abilities. We will discuss,
using the classical series expansion of $J_m$, how it may work
in this context. Actually, not too well.


With the notations from the above for $q_r$, $\alpha_{m,r}$ etc.,
\eqref{Besselseries} yields
\begin{equation*}
q_{r}(x)=x^{-m/2} \beta^{-m} J_m(2\beta\sqrt{x}) = \sum_{j=0}^{\infty}
\frac{(-b)^j}{j!(m+j)!} x^{j}, \qquad \textrm{with} \quad \beta:=\sqrt{b},~ b=\beta^2 = \frac{\alpha_{m,r}^2}{4},
\end{equation*}
Then we can evaluate the functionals
\begin{align*}
G(q_r)&= \int_0^1 x^{m+1} q'^2_r(x)dx, \quad
F(q_r)= \int_0^1 (m+1)x^{m} q_r^2(x)dx =(-2) \int_0^1 x^{m+1} q_r(x) q'_r(x) dx.
\end{align*}
Computations with the power series provide
\begin{align*}
G(q_r)&= b^2 \sum_{j,\ell=0}^{\infty} \frac{(-b)^{j+\ell}}{(m+j+\ell+2)j!\ell ! (m+j+1)! (m+\ell+1)!},
\intertext{and}
F(q_r)&= 2 b \sum_{j,\ell=0}^{\infty} \frac{(-b)^{j+\ell}}{(m+j+\ell+2)j!\ell ! (m+j)! (m+\ell+1)!}~.
\end{align*}
With some reformulations we can also write
\begin{align*} 
&\frac{F(q_r)}{G(q_r)}
=\frac{2}{b} \frac{\sum_{\nu=0}^{\infty} \frac{(-b)^{\nu}}{(m+\nu)!(m+\nu+2)!} \sum_{j=0}^{\nu} \binom{m+\nu}{\nu-j} \binom{m+\nu+1}{j}} {\sum_{\nu=0}^{\infty} \frac{(-b)^{\nu}}{(m+\nu+1)!(m+\nu+2)!} \sum_{j=0}^{\nu} \binom{m+\nu+1}{\nu-j} \binom{m+\nu+1}{j}} \notag
\\ &= \frac{2}{b} \frac{\sum_{\nu=0}^{\infty} \frac{(-b)^{\nu}}{(m+\nu)!(m+\nu+2)!} \binom{2m+2\nu+1}{\nu}}{\sum_{\nu=0}^{\infty} \frac{(-b)^{\nu}}{(m+\nu+1)!(m+\nu+2)!} \binom{2m+2\nu+2}{\nu}}= \frac{1}{b} \frac{\sum_{\nu=0}^{\infty} \frac{(-b)^{\nu}(2m+\nu+2)}{(m+\nu+1)!(m+\nu+2)!}\binom{2m+2\nu+2}{\nu}}{\sum_{\nu=0}^{\infty} \frac{(-b)^{\nu}}{(m+\nu+1)!(m+\nu+2)!} \binom{2m+2\nu+2}{\nu}} \notag
\\ &= \frac{2}{b} \frac{\sum_{\nu=0}^{\infty} \frac{(-b)^{\nu}}{(2m+2\nu+2)!}\binom{2m+2\nu+2}{m+\nu}\binom{2m+2\nu+1}{\nu}}{\sum_{\nu=0}^{\infty} \frac{(-b)^{\nu}}{(2m+2\nu+3)!}\binom{2m+2\nu+3}{m+\nu+1} \binom{2m+2\nu+2}{\nu}}
= \frac{2}{b} \frac{\sum_{\nu=0}^{\infty}
\frac{(2m+2\nu+1)!}{(m+\nu)!(m+\nu+2)!(2m+\nu+1)!\nu!}(-b)^{\nu} }{\sum_{\nu=0}^{\infty} \frac{(2m+2\nu+2)!}{(m+\nu+1)!(m+\nu+2)!(2m+\nu+2)!\nu!}(-b)^{\nu}}.
\end{align*}

Unfortunately the series expansions here have large and
oscillating terms, so dealing with it does not seem to be
simple. When, e.g., $b$ is of the order $m^2$, then also the
terms with $\nu \approx m$ are the highest, and there are a
large number of similar order large terms. Therefore, this
series expansion does not seem to be suitable neither for the
computation of the value of the ratio, nor for the extraction
of a good polynomial approximation which would approach the
global maximum while remaining manageable.


\section{Approximate maximization by polynomials}\label{sec:poly}

\noindent\textbf{5.1.} In the aimed applications in showing small gaps between
consecutive prime numbers it is very important to have a
suitably nice, manageable function $P$. It is enough to mention
that even in the simplest case of $P(x)=x^{k+\ell}$, $\ell \asymp \sqrt{k}$ the technical difficulties become rather serious when $k$ and $\ell$ tend to infinity with the size $N$ of the primes, see \cite{GPYII}. The details of these aspects, when the choice of the weight function is done according to the present work, will be handled in the forthcoming paper \cite{Pintzuj}. Here we will only present the foreseen choice of the weight $P$, and show its approximate optimality. The said choice will be a relatively simple function, actually a real polynomial $P(x)$, satisfying the conditions
\begin{equation}\label{Polyproperties}
x^k | P(x), ~~ P(1)> 0, ~~ \deg P(x) = k +C_0 k^{1/3},
\end{equation}
which is essentially optimal in the extremal problem
\eqref{Sproblem}. More exactly, with the notations
\begin{align}\label{Ak}
A_k:=\int_0^1\frac{x^{k-2}}{(k-2)!} \left( P^{(k-1)}(1-x)\right)^2 dx,
\qquad
B_k:=\int_0^1\frac{x^{k-1}}{(k-1)!} \left( P^{(k)}(1-x)\right)^2 dx 
\end{align}
it satisfies with an absolute constant $C_1$
\begin{equation}\label{AkminusBk}
A_k(k+C_1k^{1/3}) - 4 B_k \geq 0.
\end{equation}
Equivalently,
\begin{equation}\label{SPk}
S(P,k):=\frac{A_k}{B_k} \geq \frac{4}{k+C_1 k^{1/3}},
\end{equation}
in full correspondence with  \eqref{eq:mainSk}.
Compared with the optimal transformed Bessel function $q_1(x)=
x^{-m/2}J_m(\alpha_{m,1}\sqrt{x})$, the difference is only in
the value of the constant $C_1$.

In order to define our polynomial we put
\begin{equation}\label{Mgdef}
M:=\lceil C_1 k^{1/3}/6 \rceil,\qquad g(y):=(y-1)^4(2-y)^4.
\end{equation}
Let us remark that the exact choice of $g(y)$ is irrelevant,
any positive polynomial or even a function $g\in C^1[1,2]$ with
a zero of order at least 3 at $y=1$ and $y=2$ would suffice for
our purposes. After this, let
\begin{equation}\label{Pkdef}
P(x):=P_k(x):=\sum_{\ell=M\atop 2\nmid \ell}^{2M} g\left(
\frac{\ell}{M}\right) \left( \frac{k}{2}\right)^{\ell}
\frac{x^{k+\ell}}{(k+\ell)!}~.
\end{equation}
\textbf{5.2.} In evaluating $A_k$ and $B_k$ we will use the well-known relation (easily obtained by partial integration and induction) for the Euler
integral
\begin{align}\label{EulerB}
B(m,n)&:=\int_0^1 x^{n} (1-x)^{m} dx = \frac{m!}{(n+1)\cdots(m+n)} \int_0^1 x^{m+n} dx = \frac{n!~m!}{(m+n+1)!}.
\end{align}
In view of \eqref{Pkdef}--\eqref{EulerB} we have
\begin{align}\label{Pformuli}
P^{(k-1)}(x)&=\sum_{\ell=M\atop 2\nmid \ell}^{2M} g\left( \frac{\ell}{M}\right) \left( \frac{k}{2}\right)^{\ell} \frac{x^{\ell+1}}{(\ell+1)!},~\\
P^{(k)}(x)&=\sum_{\ell=M\atop 2\nmid \ell}^{2M} g\left( \frac{\ell}{M}\right) \left( \frac{k}{2}\right)^{\ell} \frac{x^{\ell}}{\ell !}, \\
\left(P^{(k-1)}(x)\right)^2 &=\sum_{\ell_1=M\atop 2\nmid \ell_1}^{2M} \sum_{\ell_2=M\atop 2\nmid \ell_2}^{2M} g\left( \frac{\ell_1}{M}\right) g\left( \frac{\ell_2}{M}\right) \left( \frac{k}{2}\right)^{\ell_1+\ell_2} \frac{x^{\ell_1+\ell_2+2}}{(\ell_1+1)!(\ell_2+1)!}, \\
\left(P^{(k)}(x)\right)^2 &=\sum_{\ell_1=M\atop 2\nmid \ell_1}^{2M} \sum_{\ell_2=M\atop 2\nmid \ell_2}^{2M} g\left( \frac{\ell_1}{M}\right) g\left( \frac{\ell_2}{M}\right) \left( \frac{k}{2}\right)^{\ell_1+\ell_2} \frac{x^{\ell_1+\ell_2}}{\ell_1!~\ell_2!}, \\
A_k &=\sum_{\ell_1=M\atop 2\nmid \ell_1}^{2M} \sum_{\ell_2=M\atop 2\nmid \ell_2}^{2M} g\left( \frac{\ell_1}{M}\right) g\left( \frac{\ell_2}{M}\right) \frac{\left({k}/{2}\right)^{\ell_1+\ell_2}}{(k+\ell_1+\ell_2+1)!} \binom{\ell_1+\ell_2+2}{\ell_1+1}, \\
B_k &=\sum_{\ell_1=M\atop 2\nmid \ell_1}^{2M} \sum_{\ell_2=M\atop 2\nmid \ell_2}^{2M} g\left( \frac{\ell_1}{M}\right) g\left( \frac{\ell_2}{M}\right) \frac{\left({k}/{2}\right)^{\ell_1+\ell_2}}{(k+\ell_1+\ell_2)!} \binom{\ell_1+\ell_2}{\ell_1}.
\end{align}
In the following, put
\begin{align}\label{uvHD}
u:=\ell_1+1, \:v&:=\ell_2+1,\quad \textrm{and} \quad H:=(u+v)/2, \: D:=(u-v)/2.
\end{align}
Taking into account $2\nmid \ell_1, \ell_2$ and $g(1)=g(2)=0$,
the even variables $u,v$ will run from $M+1$ to $2M$ and we
will have
\begin{equation}\label{k!l!}
\frac{k^{\ell_1+\ell_2} k!}{(k+\ell_1+\ell_2)!} = 1 + O\left( \frac{M^2}{k}\right).
\end{equation}
Here and elsewhere in the sequel the implied absolute constants
of the $O$ symbol as well as the absolute constants $C_i$
($i\geq 2$) will be always independent from $C_1$.

Clearly, we can replace $A_k':=k! A_k$ and $B_k':=k! B_K$ for
$A_k$ and $B_k$, resp., in \eqref{AkminusBk}, hence in order to
show \eqref{AkminusBk} it suffices to prove
\begin{equation}\label{Ikgoalnonneg}
\sum_{H=M+1}^{2M} I_k(H) \left(1 + O\left( \frac{M^2}{k}\right)\right) \geq 0,
\end{equation}
where
\begin{equation}\label{IkHdef}
I_k(H):=2^{-2H} \sum_{2 | u,v, ~u+v=2H \atop u,v \in (M,2M]} g\left( \frac{u-1}{M}\right)
g\left( \frac{v-1}{M}\right) \binom{u+v-2}{u-1} \left\{ \frac{k+C_1 k^{1/3}}{k+4M}
\frac{(u+v)(u+v-1)}{uv}-4 \right\}.
\end{equation}
Let us denote the above summation conditions simply by
$\sum^*_{u,v}$ and let us consider first
\begin{align}\label{Ik'}
I_k'(H)&:=2^{-2H} \sum\nolimits^*_{u,v} g\left( \frac{u-1}{M}\right) g\left( \frac{v-1}{M} \right)
\binom{u+v-2}{u-1} \left\{\frac{(u+v)(u+v-1)}{uv}-4 \right\}
\notag \\ &=2^{-2H} \sum\nolimits^*_{u,v} g\left( \frac{u-1}{M}\right) g\left( \frac{v-1}{M}\right)
\binom{u+v-2}{u-1} ~\frac{4D^2-2H}{H^2-D^2} .
\end{align}
We will see later that the mere significance of the weight
function $g$ is to cut the tails when $H$ is near to $M$ or
$2M$, so first we will investigate the simpler sum
\begin{equation}\label{Ik"}
I_k''(H):=2^{-2H} \sum\nolimits^*_{u,v} \binom{u+v-2}{u-1} ~\frac{4D^2-2H}{H^2-D^2} ,
\end{equation}
when
\begin{equation}\label{HtM}
H\in [M+t(M),2M-t(M)],\quad t(M):=4\sqrt{M\log M}.
\end{equation}
By Stirling's formula
\begin{equation}\label{Stirling}
\log \Gamma(s)=\bigl(s-\tfrac12\bigr)\log s - s + \tfrac12 \log\bigl(2\pi\bigr) +
O(\tfrac1{|s|}),
\end{equation}
we obtain
\begin{align}\label{logsummand}
\log  & \left(2^{-2H}  \binom{u+v-2}{u-1} \right) = \log \left( \frac{\Gamma(2H-1)
2^{-2H}}{\Gamma(H+D)\Gamma(H-D)} \right)
\notag \\ &  = \left(2H-\frac32 \right) \left( \log H + \log 2 + \log
\left( 1-\frac1{2H}\right)\right) +1-\frac{\log2\pi}{2}-2H\log 2 + O\left( \frac{1}{H}\right)
\notag \\ &\qquad -\left( H+D-\frac12 \right) \left(\log H+\log \left( 1+\frac{D}{H}\right)\right)
- \left( H-D-\frac12 \right) \left( \log H + \log \left( 1-\frac{D}{H}\right)\right)
\notag \\
& = \frac{-\log H}{2} - \frac{3\log 2}{2} - 1 + 1 - \frac{\log 2\pi}{2}
\notag \\ &\qquad - \sum_{n=0}^{\infty} \left( \frac{2D}{2n+1} \left(\frac{D}{H}\right)^{2n+1} - \frac{2H}{2n+2} \left(\frac{D}{H}\right)^{2n+2} \right)+ O\left( \frac{1}{H} + \frac{D^2}{H^2}\right).
\notag \\
& = \frac{-\log H}{2} - \frac{\log 16 \pi}{2} - \sum_{n=0}^{\infty} \frac{1}{(n+1)(2n+1)} \frac{D^{2n+2}}{H^{2n+1}} + O\left( \frac{1}{H} + \frac{D^2}{H^2}\right).
\end{align}
Using $e^{-|x|}=1+O(x)$ we obtain from \eqref{Ik"}, \eqref{HtM}
and \eqref{logsummand}
\begin{align}\label{Ik"cutesti}
I_k''(H)&=\frac{1}{2\sqrt{\pi}H^{3/2}}  \hskip-0.5em\sum_{2|D- H \atop {M<H-D,H+D\leq 2M}} \hskip-0.5em
e^{-D^2/H} \left( 2\frac{D^2}{H} -1\right) \left( 1+O\left( \frac{1}{H}+\frac{D^2}{H^2}
+ \frac{D^4}{H^3}\right) \right) \notag \\
&=\frac{I_k^*(H)}{2\sqrt{\pi}H^{3/2}}  + O\left( \frac{1}{M^2}\right)
+O\biggl( \int_{t(M)}^\infty \frac{t^2}{H}e^{-t^2/H} dt\biggr)
= \frac{I_k^*(H)}{2\sqrt{\pi}H^{3/2}}  + O\left( \frac{1}{M^2}\right),
\end{align}
where
\begin{equation}\label{Ikstar}
I_k^*(H):= \sum_{2|m- H  \atop m=-\infty}^\infty f(m),\qquad f(x)=\left( 2\frac{x^2}{H}-1\right)e^{-x^2/H}.
\end{equation}

\medskip\noindent\textbf{5.3.}
We will use Poisson summation formula
\begin{equation}\label{Poisson}
\sum_{n=-\infty}^{\infty} f(t+nT) = \frac{1}{T} \sum_{\nu=-\infty}^{\infty}\widehat{f}\left( \frac{\nu}{T}\right) e^{2\pi i \nu t/T}
\end{equation}
with $T=2$, $t=0$ for $2|H$ and $T=2$, $t=1$ for $2\nmid H$, valid if\footnote{For this form see \cite[Chapter VII, \S 2]{SW}.}
\begin{equation}\label{fplusfhat}
|f(x)|+|\widehat{f}(x)|\leq C(1+|x|^{-1-\de}) \quad \textrm{with some}\quad  \de>0, C>0.
\end{equation}

Let us consider first the case when $H$ is even, that is when $m=2n$ in the above sum \eqref{Ikstar}. We have then
\begin{align}\label{I}
2 I_k^* (H)&= 2\sum_{n=-\infty}^{\infty} f(2n) = \sum_{\nu=-\infty}^{\infty} \widehat{f}(\nu/2)
=\sum_{\nu=-\infty}^{\infty} ~ \int_{-\infty}^{\infty} f(x) e^{\pi i \nu x} dx
\notag \\ &=\sum_{\nu=-\infty}^{\infty} ~ \int_{-\infty}^{\infty} e^{-x^2/H} \left(2\frac{x^2}{H} -1 \right)e^{\pi i \nu x} dx
= \sqrt{H} \sum_{\nu=-\infty}^{\infty} ~ \int_{-\infty}^{\infty}
e^{-y^2} \left(2y^2-1\right) e^{\pi i \nu \sqrt{H} y} dy \notag \\
&=\sqrt{H} \sum_{\nu=-\infty}^{\infty} ~\int_{\Im y=0} e^{-(y-\pi i
\nu \sqrt{H}/2)^2-\pi^2\nu^2H/4} \notag \\ &\qquad \cdot \left\{
2(y-\pi i \nu \sqrt{H} /2)^2 + 2 (y-\pi i \nu \sqrt{H} /2) \pi i \nu
\sqrt{H} -\pi^2\nu^2H/2-1\right\} dy \notag \\ & = \sqrt{H}
\sum_{\nu=-\infty}^{\infty} \int_{\Im z=-\pi \nu \sqrt{H}/2}
e^{-z^2-\pi^2\nu^2H/4} \left\{ 2z^2+ 2 \pi i \nu \sqrt{H} z -
\pi^2\nu^2 H /2 -1 \right\} dz \notag \\ & = \sqrt{H}
\sum_{\nu=-\infty}^{\infty} ~ \int_{\Im z=0} e^{-z^2-\pi^2\nu^2H/4}
\left\{ 2z^2+ 2 \pi i \nu \sqrt{H} z - \pi^2\nu^2 H /2 -1 \right\}
dz \notag \\ & =\sqrt{H} \int_{z\in\RR} e^{-z^2} (2z^2-1) dz +
O\left( e^{-2H}\right) =
\sqrt{H}\left[-ze^{-z^2}\right]_{-\infty}^{\infty} +O\left(
e^{-2H}\right)=O\left( e^{-2H}\right).
\end{align}
The proof
runs completely analogously for $t=1$, that is, when $H$ is
odd, because the extra factor $e^{\pi i \nu}$ does not change
the modulus of $\widehat{f}(\nu/2)$.

\medskip\noindent\textbf{5.4.} Dealing with the original integral $I_k'(H)$ we have to take into account the effect of the weight function $g$ as well. From the Taylor expansion of $g$ we find
\begin{align}\label{gTaylor}
g\left( \frac{H\pm D-1}{M} \right) & = g\left( \frac{H-1}{M}\right)
\pm g'\left( \frac{H-1}{M}\right) \frac{D}{M} + O \left( \frac{D^2}{M^2}\right)
\end{align}
so this effect is
\begin{equation}\label{geffect}
g\left( \frac{H+D-1}{M} \right) g\left( \frac{H-D-1}{M} \right) = g^2 \left( \frac{H-1}{M}\right) + O \left( \frac{D^2}{M^2}\right).
\end{equation}
The effect of the error term on $I_k'(H)$ is here, similarly to
\eqref{Ik"},
\begin{equation}\label{OH32}
O \left( H^{-3/2} \sum_{2|D-H \atop |D|<M} e^{-D^2/H} \left(1+\frac{D^2}{H} \right)
\left(1+\frac{D^4}{H^3} \right) \frac{D^2}{M^2}\right) = O \left( \frac{1}{M^2}\right).
\end{equation}
So we obtain for all integers $H$ subject to \eqref{HtM} from
\eqref{Ik'}, \eqref{Ik"}, \eqref{Ik"cutesti}, \eqref{Poisson},
\eqref{I}, \eqref{geffect} and \eqref{OH32}
\begin{equation}\label{OIkprime}
I_k'(H)=I''_k(H) g^2 \left( \frac{H-1}{M}\right) + O \left(
\frac{1}{M^2}\right) = \frac{g^2 \left(
\frac{H-1}{M}\right)}{2\sqrt{\pi} H^{3/2}} I^*_k(H) + O \left(
\frac{1}{M^2}\right) \geq - \frac{C_2}{M^2}.
\end{equation}
On the other hand, if \eqref{HtM} does not hold, that is if
\begin{equation}\label{HMt}
\min \left( H-M,2M-H\right) < t(M),
\end{equation}
then we find $|D| < t(M)$ and $|H\pm D-1| \in [M,M+2t(M)]\cup
[2M-2t(M),2M]$, whence
\begin{equation}\label{DtM}
g\left( \frac{H+D-1}{M} \right) g\left(\frac{H-D-1}{M} \right)
\ll \frac{t^4(M)}{M^4} \ll \frac{1}{M^{7/4}}.
\end{equation}
This implies in case of \eqref{HMt}
\begin{align}\label{IkprimeHMt}
I'_k(H) \ll M^{-7/4} M^{-3/2} \int_{-\infty}^{\infty} e^{-u^2/H} \left( \frac{u^2}{H} +1\right)^3 du \ll M^{-11/4}.
\end{align}
Consequently, for $k>k_0$ we have by $I_k(H)\geq I_k'(H)$
\begin{equation}\label{sumIk}
\sum_{H \atop \min(H-M,2M-H)<t(M)} \hskip-0.5em I_k(H) \geq  \hskip-0.5em\sum_{H \atop \min(H-M,2M-H)<t(M)}  \hskip-0.5emI'_k(H) \geq -C_3 M^{-2}.
\end{equation}
Finally, let us investigate now for $H$ in \eqref{HtM} the difference
\begin{equation}\label{}
I_k(H)  - I'_k(H) \geq \frac{M}{k} \cdot 3 \widetilde{I_k}(H),
\end{equation}
where similarly to the above considerations
\begin{align}\label{Iktilde}
\widetilde{I_k}(H) &:= 2^{-2H} \sum\nolimits^*_{u,v} g\left( \frac{u-1}{M} \right)
g\left( \frac{v-1}{M}\right) \binom{u+v-2}{u-1}
\notag \\ &= \frac{1}{2\sqrt{\pi H}}  \hskip-0.5em\sum_{2|D- H \atop M<H-D, H+D\leq 2M} \hskip-0.5em e^{-D^2/H}
\left( 1 + O\left(\frac1{H} + \frac{D^2}{H^2} + \frac{D^4}{H^3} \right) \right)
\\ \notag & \qquad \qquad \qquad \qquad \cdot \left( g^2 \left( \frac{H-1}{M}\right)
+ O \left( \frac{D^2}{M^2}\right) \right) \geq C_4 g^2 \left(
\frac{H-1}{M}\right) - \frac{C_5}{M}.
\end{align}
Summing over all $H$ in \eqref{HMt} we obtain
\begin{equation}\label{edgesum}
\sum_{H=M+t(M)}^{2M-t(M)} I_k(H) \geq \sum_{H=M+t(M)}^{2M-t(M)}
I'_k(H) +\frac{C_6 M^2}{k}.
\end{equation}
Therefore by \eqref{OIkprime} and \eqref{sumIk}--\eqref{edgesum} we have finally
\begin{equation}\label{finalsumIk}
\sum_{H=M+1}^{2M} I_k(H) \geq \frac{C_6 M^2}{k} - \frac{C_2}{M} -
\frac{C_3}{M^2} > 0,
\end{equation}
if $C_1$ was chosen sufficiently large, satisfying
\begin{equation}\label{C1condi}
C_1> 6 \left( \frac{C_2}{C_6}\right) ^{1/3}.
\end{equation}

\bibliographystyle{degruyter-plain}
\bibliography{ThreeQuarter}

\parindent=0pt
\end{document}